\newtheorem{thm}[equation]{Theorem} 
\newtheorem{thm2}[equation]{Theorem$^\prime$}
\newtheorem{lem}[equation]{Lemma}
\newtheorem{prop}[equation]{Proposition}
\theoremstyle{definition}
\theoremstyle{remark}
\newtheorem{rem}[equation]{Remark}
\numberwithin{equation}{section}
\begin{document}

%Referring commands:
\newcommand{\thmref}[1]{Theorem~\ref{#1}}
\newcommand{\secref}[1]{Section~\ref{#1}}
\newcommand{\lemref}[1]{Lemma~\ref{#1}}
\newcommand{\propref}[1]{Proposition~\ref{#1}}
\newcommand{\corref}[1]{Corollary~\ref{#1}}
\newcommand{\remref}[1]{Remark~\ref{#1}}
\newcommand{\eqnref}[1]{(\ref{#1})}

\newcommand{\exref}[1]{Example~\ref{#1}}

 \newcommand{\GSp}{\mathrm{GSp}}
 \newcommand{\PGSp}{\mathrm{PGSp}}
\newcommand{\PGSO}{\mathrm{PGSO}}
\newcommand{\PGO}{\mathrm{PGO}}
\newcommand{\SO}{\mathrm{SO}}
\newcommand{\GO}{\mathrm{GO}}
\newcommand{\GSO}{\mathrm{GSO}}
\newcommand{\Spin}{\mathrm{Spin}}
\newcommand{\Sp}{\mathrm{Sp}}
\newcommand{\PGL}{\mathrm{PGL}}
\newcommand{\GL}{\mathrm{GL}}
\newcommand{\SL}{\mathrm{SL}}
\newcommand{\U}{\mathrm{U}}
\newcommand{\ind}{\mathrm{ind}}
\newcommand{\Ind}{\mathrm{Ind}}
\newcommand{\im}{\mathrm{im}}
\renewcommand{\ker}{\mathrm{ker}}
 \newcommand{\triv}{\mathrm{triv}}
  \newcommand{\std}{\mathrm{std}}
 \newcommand{\Ad}{\mathrm{Ad}}
  \newcommand{\ad}{\mathrm{ad}}
  \newcommand{\Tr}{\mathrm{Tr}}
  \renewcommand{\S}{\mathscr{S}}
  \newcommand{\Y}{\mathbb{Y}}
\newcommand{\End}{\mathrm{End}}
\newcommand{\vol}{\mathrm{vol}}
\newcommand{\bigzero}{\mbox{\normalfont\Large\bfseries 0}}

%Theorem for the introduction
\newtheorem{innercustomthm}{{\bf Theorem}}
\newenvironment{customthm}[1]
  {\renewcommand\theinnercustomthm{#1}\innercustomthm}
  {\endinnercustomthm}
  
  \newtheorem{innercustomcor}{{\bf Corollary}}
\newenvironment{customcor}[1]
  {\renewcommand\theinnercustomcor{#1}\innercustomcor}
  {\endinnercustomthm}
  
  \newtheorem{innercustomprop}{{\bf Proposition}}
\newenvironment{customprop}[1]
  {\renewcommand\theinnercustomprop{#1}\innercustomprop}
  {\endinnercustomthm}

\newcommand{\bbinom}[2]{\begin{bmatrix}#1 \\ #2\end{bmatrix}}
\newcommand{\cbinom}[2]{\set{\^!\^!\^!\begin{array}{c} #1 \\ #2\end{array}\^!\^!\^!}}
\newcommand{\abinom}[2]{\ang{\^!\^!\^!\begin{array}{c} #1 \\ #2\end{array}\^!\^!\^!}}
\newcommand{\qfact}[1]{[#1]^^!}

%Simplified symbols:
\newcommand{\nc}{\newcommand}

\nc{\Ord}{\text{Ord}_v}

 \nc{\A}{\mathbb A} 
  \nc{\G}{\mathbb G} 
\nc{\Ainv}{\A^{\rm inv}}
\nc{\aA}{{}_\A}
\nc{\aAp}{{}_\A'}
\nc{\aff}{{}_\A\f}
\nc{\aL}{{}_\A L}
\nc{\aM}{{}_\A M}
\nc{\Bin}{B_i^{(n)}}
\nc{\dL}{{}^\omega L}
\nc{\Z}{{\mathbb Z}}
 \nc{\C}{{\mathbb C}}
 \nc{\N}{{\mathbb N}}
 \nc{\R}{{\mathbb R}}
 \nc{\fZ}{{\mf Z}}
 \nc{\F}{{\mf F}}
 \nc{\Q}{\mathbb{Q}}
 \nc{\la}{\lambda}
 \nc{\ep}{\epsilon}
 \nc{\h}{\mathfrak h}
 \nc{\He}{\bold{H}}
 \nc{\htt}{\text{tr }}
 \nc{\n}{\mf n}
 \nc{\g}{{\mathfrak g}}
  \renewcommand{\o}{{\mathfrak o}}
 \nc{\DG}{\widetilde{\mathfrak g}}
 \nc{\SG}{\breve{\mathfrak g}}
 \nc{\is}{{\mathbf i}}
 \nc{\V}{\mf V}
 \nc{\bi}{\bibitem}
 \nc{\E}{\mc E}
 \nc{\ba}{\tilde{\pa}}
 \nc{\half}{\frac{1}{2}}
 \nc{\hgt}{\text{ht}}
 \nc{\ka}{\kappa}
 \nc{\mc}{\mathcal}
 \nc{\mf}{\mathfrak} 
 \nc{\hf}{\frac{1}{2}}
\nc{\ov}{\overline}
\nc{\ul}{\underline}
\nc{\I}{\mathbb{I}}
\nc{\xx}{{\mf x}}
\nc{\id}{\text{id}}
\nc{\one}{\bold{1}}
\nc{\mfsl}{\mf{sl}}
\nc{\mfgl}{\mf{gl}}
\nc{\ti}[1]{\textit{#1}}
\nc{\Hom}{\mathrm{Hom}}
\nc{\Irr}{\mathrm{Irr}}
\nc{\Cat}{\mathscr{C}}
\nc{\CatO}{\mathscr{O}}
\renewcommand{\O}{\mathrm{O}}
\nc{\Tan}{\mathscr{T}}
\nc{\Umod}{\mathscr{U}}
\nc{\Func}{\mathscr{F}}
\nc{\Kh}{\text{Kh}}
\nc{\Khb}[1]{\llbracket #1 \rrbracket}

\nc{\ua}{\mf{u}}
\nc{\nb}{u}
\nc{\inv}{\theta}
\nc{\mA}{\mathcal{A}}
\newcommand{\TT}{\mathbf T}
\newcommand{\TA}{{}_\A{\TT}}
%\nc{\bV}{{\bold V}}
\newcommand{\tK}{\widetilde{K}}
\newcommand{\al}{\alpha}
\newcommand{\Fr}{\bold{Fr}}

%quantum groups U
\nc{\Qq}{\Q(v)}
\nc{\uu}{\mathfrak{u}}
\nc{\Udot}{\dot{\U}}

\nc{\f}{\bold{f}}
\nc{\fprime}{\bold{'f}}
\nc{\B}{\bold{B}}
\nc{\Bdot}{\dot{\B}}
%\nc{\psi}{\psi}
\nc{\Dupsilon}{\Upsilon^{\vartriangle}}
\newcommand{\T}{\texttt T}
\newcommand{\vs}{\varsigma}
\newcommand{\Pa}{{\bf{P}}}
\newcommand{\Padot}{\dot{\bf{P}}}

%Ui
\nc{\ipsi}{\psi_{\imath}}
\nc{\Ui}{{\bold{U}^{\imath}}}
\nc{\uidot}{\dot{\mathfrak{u}}^{\imath}}
\nc{\Uidot}{\dot{\bold{U}}^{\imath}}
%\nc{\aA \Ui}{{{}_\A\bold{U}^{\imath}}}
%\nc{\aA \Uidot}{{}_\A\dot{\bold{U}}^{\imath}}
 \nc{\be}{e}
 \nc{\bff}{f}
 \nc{\bk}{k}
 \nc{\bt}{t}
 \nc{\bs}{\backslash}
 \nc{\BLambda}{{\Lambda_{\inv}}}
\nc{\Ktilde}{\widetilde{K}}
\nc{\bktilde}{\widetilde{k}}
\nc{\Yi}{Y^{w_0}}
\nc{\bunlambda}{\Lambda^\imath}
\newcommand{\Iwhite}{\I_{\circ}}
\nc{\ile}{\le_\imath}
\nc{\il}{<_{\imath}}

%newUi
\newcommand{\ff}{B}

%bilinear form
%\nc{\qq}{(q_i^{-1}-q_i)}
%\nc{\qqq}{(1-q_i^{-2})^{-1}}
%\nc{\qqqj}{(1-q_j^{-2})^{-1}}

%Blackdots
\nc{\etab}{\eta^{\bullet}}
\newcommand{\Iblack}{\I_{\bullet}}
\newcommand{\wb}{w_\bullet}
\newcommand{\UIblack}{\U_{\Iblack}}

%color
\newcommand{\blue}[1]{{\color{blue}#1}}
\newcommand{\red}[1]{{\color{red}#1}}
\newcommand{\green}[1]{{\color{green}#1}}
\newcommand{\white}[1]{{\color{white}#1}}

%i-divided powers
%\newcommand{\dvev}[1]{{\mathfrak{t}}_{\ev}^{{(#1)}}}
%\newcommand{\dv}[1]{{\mathfrak{t}}_{\odd}^{{(#1)}}}
\newcommand{\dvd}[1]{t_{\odd}^{{(#1)}}}
\newcommand{\dvp}[1]{t_{\ev}^{{(#1)}}}
\newcommand{\ev}{\mathrm{ev}}
\newcommand{\odd}{\mathrm{odd}}

\newcommand\TikCircle[1][2.5]{{\mathop{\tikz[baseline=-#1]{\draw[thick](0,0)circle[radius=#1mm];}}}}

\newcommand{\commentcustom}[1]{}

\raggedbottom

\title[Whittaker models and relative Langlands duality II]
{Generalised Whittaker models as instances of relative Langlands duality II: \\
Plancherel density and global periods}

\author{Wee Teck Gan}
\author{Bryan Wang Peng Jun}
 \address{Department of Mathematics, National University of Singapore, 10 Lower Kent Ridge Road, Singapore 119076}
\email{matgwt@nus.edu.sg}
\email{bwang@nus.edu.sg}

\begin{abstract}
In an earlier paper of the authors, a general family of instances of the relative Langlands duality of Ben-Zvi$-$Sakellaridis$-$Venkatesh \cite{BZSV} were proposed and studied in the setting of branching problems for smooth representations. In this paper, we show the numerical conjectures of \cite{BZSV} for the local Plancherel density, as well as an application to their conjectures on global periods, for this general family of instances. 

\end{abstract}

%\vspace{.3cm}
\maketitle

\setcounter{tocdepth}{1}
\tableofcontents

\section{Introduction}\label{sec:Introduction}

This is a sequel to an earlier paper \cite{GW} of the authors. There, the main aim was to explicate the recently-proposed relative Langlands duality of \cite{BZSV} in the (local) setting of branching problems for smooth representations, and to propose many (new) cases in which such duality could be verified. 

However, \cite{GW} only treated the conjectures of \cite{BZSV} in the smooth setting, and did not treat some other conjectures there, such as  the conjectural precise formula for the local Plancherel density \cite[Section 9]{BZSV} as well as the conjecture about global period \cite[Section 14]{BZSV}.
The aim of the present paper is to treat these remaining issues in one representative new case introduced in \cite[Section 7]{GW}. The method employed can be similarly applied without much difficulty to the other cases (but we limit to this case for ease of exposition), using the theta correspondence between the odd orthogonal group and metaplectic groups, and the Lapid-Mao conjecture
for metaplectic groups (in the global case).

\vskip 5pt

\subsection{Background} Let us now recall the relevant context and background as laid out in \cite{BZSV} and \cite{GW}. One of the central themes of the relative Langlands program is to characterize the non-vanishing of certain periods of automorphic forms on a reductive group $G$, and when the period is non-zero, to relate it  to certain special values of automorphic $L$-functions. For a long time, however, this relation between periods and $L$-functions has been relatively poorly understood, with a disparate collection of instances known, but no systematic framework with which to understand this relationship. 

A systematic (i.e. general) study of the relations between periods and $L$-functions was first laid out in the book \cite{SV}, using the framework of spherical varieties. There, the important observation was further made, that the factorisation of the global period functionals (associated to a spherical variety $X$), into corresponding local functionals, should be facilitated by local Plancherel functionals obtained from the spectral decomposition of $L^2(X)$. In particular, at unramified places, the evaluation of these Plancherel functionals on the (unit) spherical vector should give rise to (special values of) corresponding local $L$-factors. This forms the basis for relating the global period functional to a corresponding $L$-value.

However, the framework of \cite{SV} was still not entirely satisfactory in some respects. On one hand, as also explained in \cite{GW}, there are several important examples exhibiting analogous phenomena, such as models with a Whittaker-twisted component (arising from a nilpotent orbit), or Howe duality/theta correspondence, which do not fall (strictly) into the framework of \cite{SV}. On the other hand, the framework of \cite{SV} also does not explain which (and why) a particular $L$-function should be related to a given period, so that the period-$L$-function relationship still remained relatively mysterious. 

The framework recently proposed in \cite{BZSV} addresses these issues. On one hand, the periods should be indexed by certain Hamiltonian $G$-varieties, known as hyperspherical varieties (generalising spherical varieties), which encompass the aforementioned examples of Whittaker-twisted models and theta correspondence. On the other hand, the corresponding $L$-functions should also be indexed by analogous hyperspherical $G^{\vee}$-varieties (where $G^{\vee}$ is the Langlands dual group of $G$). Key to this is the insight that there should exist an involutive \textit{duality} of hyperspherical varieties (in the spirit of Langlands duality), so that the period associated to a hyperspherical $G$-variety $M_1$ is related to the $L$-function associated to its dual hyperspherical $G^{\vee}$-variety $M_2$, and \textit{vice versa}. Furthermore, a similar structure should exist at the local (unramified) level: the Plancherel density associated to (the quantization of) $M_1$ should be related to the local $L$-factor associated to $M_2$, and vice versa. 
\vskip 5pt

\subsection{Results of \cite{GW}}\label{results_gw}
Let us recall briefly the results of the prequel \cite{GW}.
For this, we first recall  that a hyperspherical $G$-variety $M$ can  be built out of the following two pieces of data:
\vskip 5pt

\begin{itemize}
\item a morphism $\iota: H \times \SL_2 \longrightarrow G$, such that $H \hookrightarrow Z_G(\iota(\SL_2))$ is a spherical subgroup;
\item a symplectic representation $S$ of $H$.
\end{itemize}
In \cite{GW}, we consider special cases of hyperspherical varieties where:
\begin{itemize}
\item $G$ is a symplectic or orthogonal group;
\item $H = Z_G(\iota(\SL_2))$;
\item $S=0$.
\end{itemize}
Under these hypotheses, the associated hyperspherical $G$-variety $M$ just depends on a unipotent conjugacy class $e$ in $G$.
In this setting, we classified in \cite[Section 5]{GW} those $e$ for which $M = M_e$ hyperspherical.  In particular, if $e$ corresponds to partitions of hook type, then $M_e$ is hyperspherical. For this general family of examples, we determined in \cite{GW} their hyperspherical dual $M_e^{\vee}$, by  describing the smooth local period problem associated to $M_e$ in terms of the geometry of a dual hyperspherical $G^{\vee}$-variety $M_e^{\vee}$. Indeed,  $M_e^{\vee}$ is of the same type as $M_e$, namely is associated to a nilpotent orbit in $G^{\vee}$ of hook type, so that
\[  M_e^{\vee} = M_{e^{\vee}} \quad \text{for some $e^{\vee} \in G^{\vee}$.} \]
Moreover, we determine $e^{\vee}$ in terms of $e$. 
 \vskip 10pt
 
With these notions introduced,  we now specify  the representative family of examples from \cite[Section 7.1]{GW} that we will work with in this paper:
\vskip 5pt
\begin{itemize}
\item $G = \O_{2k}$ is an even orthogonal group;
\item the nilpotent conjugacy class $e^\vee \in G^\vee$ corresponds to the partition $[2k-2a-1,1^{2a+1}]$, so that the data defining the hyperspherical $G^\vee$-variety $M_1= M_{e^\vee}$ consists of the datum 
\[ \iota_1: \O_{2a+1}\times \SL_2\rightarrow \O_{2k}, \quad \text{with $S = 0$;} \]
\item the nilpotent conjugacy class $e \in G = \O_{2k}$ corresponds to the partition $[2a-1, 1^{2k-2a+1}]$, so that the data defining the hyperspherical $G$-variety $M_2 = M_{e}$ consists of the datum
\[ \iota_2: \O_{2k-2a+1}\times \SL_2\rightarrow \O_{2k}, \quad \text{  with $S = 0$.} \]
\end{itemize}
Then, for this particular representative family,  the main result in \cite{GW} can be loosely formulated as:

\begin{thm2}\label{thm2:EvenOrthogonal} \cite[Section 7.1]{GW}
The hyperspherical varieties $M_1$ and $M_2$ defined above are dual under the expected \cite{BZSV}-duality of hyperspherical varieties. 
 More precisely:
 \vskip 5pt
 
 \begin{itemize}
 \item[(i)] the irreducible smooth representations (of Arthur type) of $G^\vee = \O_{2k}$ which occur as submodules in the (smooth) quantization of $M_1$ have  A-parameters  factoring through $\iota_2$ and can be constructed as local theta lifts from generic representations of $\Sp_{2k-2a}$;
 \item[(ii)] the irreducible smooth representations (of Arthur type) of $G = \O_{2k}$ which occur as submodules in the (smooth) quantization of $M_2$ have  A-parameters  factoring through $\iota_1$ and can be constructed as local theta lifts from generic representations of $\Sp_{2a}$. 
 \end{itemize}
    \end{thm2}

 However, as mentioned earlier, the corresponding conjectures  in \cite{BZSV} concerning the Plancherel decomposition and global periods were not studied in \cite{GW}, and it is the aim of this paper to address these problems.

 \subsection{Main result}
The main result of the paper is (Theorem \ref{thm:MainThm}):

\begin{thm}
    The numerical conjecture for the local Plancherel density \cite[Proposition 9.2.1]{BZSV} is true for the hyperspherical dual pair $(M_1,M_2)$. 
    
    In other words, the local Plancherel density associated to (the quantization of) $M_2$ corresponds to a local $L$-value attached to $M_1$, and vice versa. 
    \end{thm}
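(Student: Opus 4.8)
The plan is to reduce the Plancherel density of the quantization $\mathcal{H}_{M_1}$ (resp.\ $\mathcal{H}_{M_2}$) to an explicit computation on the smaller symplectic group $\Sp_{2k-2a}$ (resp.\ $\Sp_{2a}$), by ``unfolding'' the associated generalised Whittaker period through the theta correspondence that already underlies Theorem$^\prime$\,\ref{thm2:EvenOrthogonal}, and then to match the outcome with the $L$-value attached to the dual datum $M_2$ (resp.\ $M_1$) predicted by \cite[Proposition 9.2.1]{BZSV}. By Theorem$^\prime$\,\ref{thm2:EvenOrthogonal} the spectral support of $\mathcal{H}_{M_1}$ is $\{\theta(\sigma):\sigma\in\widehat{\Sp_{2k-2a}}^{\,\mathrm{temp},\mathrm{gen}}\}$, with $\theta$ the theta lift to $\O_{2k}$, so $\mathcal{H}_{M_1}=\int\theta(\sigma)\,d\nu_1(\sigma)$ for a measure $\nu_1$ on $\widehat{\Sp_{2k-2a}}^{\,\mathrm{temp},\mathrm{gen}}$; the task is to compute $\nu_1$ and to identify $d\nu_1/d\mu_{\O_{2k}}$ with the \cite{BZSV}-predicted ratio of local $L$-factors. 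The key point is that $\nu_1$ is the pushforward under $\sigma\mapsto\theta(\sigma)$ of the $L^2$-Whittaker--Plancherel measure of $\Sp_{2k-2a}$, weighted by the $L^2$-norm of the theta lift; and the Whittaker--Plancherel density of $\Sp_{2k-2a}$ relative to the group's own Plancherel measure is identically $1$. Hence the whole density we seek is the norm distortion of the theta transfer together with the change of parameter coordinates $\sigma\rightsquigarrow\theta(\sigma)$.

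These two pieces are governed by explicit inputs. The norm distortion is computed by the doubling method and the Rallis inner product formula: the $L^2$-norm of $\theta(\sigma)$ on $\O_{2k}$, in the standard normalisation, is a doubling zeta integral for $\sigma$, and by the work of Piatetski-Shapiro--Rallis, Lapid--Rallis, Yamana, Gan--Takeda and Gan--Ichino the local doubling integral equals the doubling ($=$ standard) $L$-factor $L(s+\tfrac12,\sigma\times\chi_V)$, divided by the normalising $b$-factors of the relevant degenerate principal series, times a factor holomorphic and, at the point of interest, nonvanishing; the evaluation point $s_0$ and the $b$-factors that appear are dictated by the position of $\O_{2k}$ in the Witt tower of $\Sp_{2k-2a}$, equivalently by the size $2a-1$ of the Arthur $\SL_2$ in $\iota_2$. (In the unramified case this is just the Gindikin--Karpelevich/Casselman--Shalika evaluation, giving the unramified density as an explicit ratio of $L$-factors.) The change of coordinates is controlled by the parameter identity $\phi_{\theta(\sigma)}=\phi_\sigma\oplus(\chi_V\boxtimes[2a-1])$ for $\O_{2k}$: the Plancherel/formal-degree normalisation on the $\O_{2k}$-side differs from that on the $\Sp_{2k-2a}$-side by the adjoint $L$-factors attached to the extra summand $\chi_V\boxtimes[2a-1]$ and to its pairing with $\phi_\sigma$, again expressible through $L(\cdot,\sigma\times\chi_V)$ at (half-)integers.

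It then remains to unpack \cite[Sections 3--4 and 9]{BZSV} for $M_2=(\iota_2\colon\O_{2k-2a+1}\times\SL_2\to\O_{2k},\,S=0)$, read off the predicted numerator $L$-function and its normalisation --- in particular the $\rho$-shift coming from the $\SL_2$-factor $[2a-1]$ --- and check: (a) the numerator is exactly a standard $L$-function of $\Sp_{2k-2a}$ with the correct twist $\chi_V$ and shift, matching the doubling $L$-factor above; (b) the normalising $b$-factors together with the extra-summand adjoint factors combine to precisely the denominator predicted by \cite{BZSV}; (c) the degrees and the shift $s_0$ are consistent with the hook-partition combinatorics (the roles of $2a+1$ and $2k-2a+1$, the shift $\tfrac12(2a-1)$). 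This yields the identity for $M_1$; running the identical argument with $a$ and $k-a$ interchanged (theta lifts from $\Sp_{2a}$, Arthur $\SL_2$ of size $2k-2a-1$) gives the ``vice versa'' statement for $M_2$.

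The main obstacle, and where the real work lies, is the precise bookkeeping at two points. First, pinning down the evaluation point $s_0$ and the exact local behaviour --- holomorphy and nonvanishing, versus possible poles or zeros of the local intertwining operators and doubling integrals --- uniformly over archimedean, ramified $p$-adic and unramified places; this is delicate in the non-stable-range (``boundary'') regime of the theta correspondence, where first-occurrence and conservation phenomena and the sign dichotomy between $\O_{2k}^{\pm}$ intervene. Second, matching the discrete combinatorial factors: the theta correspondence introduces factors of $2$ (from the two Witt towers, from $\O$ versus $\SO$, and from the sizes of the theta packets in the near-equal-rank regime), which must be reconciled with the precise normalisation of the density in \cite[Proposition 9.2.1]{BZSV}; here Arthur's classification and the local Gan--Gross--Prasad description of the relevant packets are the needed ingredients. (Making precise, for the non-tempered Arthur-type representations $\theta(\sigma)$, the sense in which ``$d\mu_{\O_{2k}}$'' is meant --- namely as the measure transported along $\sigma\mapsto\theta(\sigma)$ from $\widehat{\Sp_{2k-2a}}^{\,\mathrm{temp},\mathrm{gen}}$ --- is a related but more routine matter.) Once these are settled, the identity follows by direct comparison of the two sides.
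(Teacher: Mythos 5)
Your plan for the numerical part of the argument is essentially the paper's: once one knows that the spectral measure of the quantization is the pushforward under theta lifting of the Whittaker--Plancherel measure of the smaller symplectic group, the density of the basic function is computed exactly as you describe --- the norm of the theta lift of the unramified vector via the local doubling zeta integral (Lapid--Rallis), the norm of the canonical Whittaker functional on the unramified vector via Lapid--Mao, and the comparison with \cite[Proposition 9.2.1]{BZSV} via the computation of $V_X$, the $\rho_{L(X)}$-shift, and Macdonald's formula for the unramified Plancherel measure of $G_X$. (The paper carries this out for $M_2$, i.e.\ for lifts from $\Sp_{2a}$; the two directions are symmetric.)

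The gap is in what you call ``the key point'': you assert, but do not prove, that the spectral measure $\nu_1$ of the quantization is the pushforward of the Whittaker--Plancherel measure weighted by the $L^2$-norm of the theta lift. This does not follow from Theorem$'$~\ref{thm2:EvenOrthogonal}, which is a statement about the \emph{smooth} quantization (which representations occur as submodules); it identifies at best the support of $\nu_1$, not the measure itself, and certainly not the weighting. Establishing this $L^2$-statement is the technical heart of the paper (Sections \ref{sec:Spectral} and \ref{sec:InnerProduct}): one first proves the inner product identity
\[ \langle \phi_1,\phi_2\rangle_X \;=\; \int_{U'}\overline{\chi_{\gamma'}(u')}\,\langle u'\cdot\Phi_1,\Phi_2\rangle_\omega\,du' \]
for $\phi_i=p(\Phi_i)$, by an iterative unfolding of the integrals over $U'_{r+1},\dots,U'_1$ (fiberwise integration along the submersions $F_m\mapsto F_mF_m^{\ast}$, transitivity of $G_{(m)}$ on the relevant orbits with stabilisers $G_{(m-1)}U_m$, and finally $L$), and then combines it with the Plancherel decomposition of the Weil representation together with a justified interchange of the $U'$-integral and the spectral integral (using the stabilisation of the regularised $U'$-integral from \cite{LM} and the compact support of $\sigma\mapsto\theta_\sigma(\Phi)$ on $\widehat{G'}$). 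The paper stresses that the earlier route of \cite{GW2} to such spectral decompositions does not generalise to this setting, so this step cannot be waved through; without it your computation of $\nu_1$ has no starting point.
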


We now explain the precise meaning of this last statement:
\[  \text{`the Plancherel density associated to $M_2$ is related to a local $L$-value attached to $M_1$'.} \]
 Denote $G=\O_{2k}$, so that $M_2$ is a $G$-variety, and $M_1$ is a $G^\vee$-variety. Note first that $M_2$ is a twisted cotangent bundle of a certain $G$-variety \[ X = \O_{2k-2a+1}U,\overline{\chi_\gamma}\bs G, \] for $U$ a unipotent subgroup of $G$ (corresponding to the nilpotent orbit defining $M_2$) and $\chi_\gamma$ a unitary character of $U$ (we will define $X$ more precisely in Section \ref{sec:Notation}).

Our first main result (Theorem \ref{thm:MainSpectral2}) concerns the spectral decomposition of $L^2(X)$, which is the quantization of $M_2$, and shows that the spectral measure on $L^2(X)$ is given by the pushforward, via theta lifting, of the (Whittaker-)Plancherel measure on $G_X := \Sp_{2a}$, or in other words, via (roughly speaking) functorial lifting through the map \[ \O_{2a+1}\times \SL_2\rightarrow \O_{2k} \] defining $M_1$. At least on the level of unramified representations, the theta lifting is certainly known to facilitate such a functorial lifting, as we already saw in Theorem' \ref{thm2:EvenOrthogonal}. In other words, Theorem \ref{thm:MainSpectral2} is the $L^2$-version of Theorem' \ref{thm2:EvenOrthogonal}.

Our second main result (Theorem \ref{thm:MainThm2}) concerns the Plancherel density of the basic function $\phi_0$ on $X$ (coming from the spectral measure of $L^2(X)$). This is a measure on $\widehat{G}_{\text{unramified}}$ (the unramified unitary dual of $G$), and hence (via 
 the Satake isomorphism) identified with a measure on $A^\vee/W$, where $A^\vee$ is the maximal torus in $\O_{2k}$ and $W$ its Weyl group. 
We would  like to describe this measure in terms of the data of $M_1$.

Denote $G_X^\vee = \O_{2a+1}$, and denote by  $A_X^{\vee (1)}$  the maximal compact subgroup of the torus $A_X^\vee$ in $G_X^\vee$. 
In view of the first main result, this measure should be given as a pushforward of a certain measure on $A_X^{\vee (1)}$, via the map \begin{align*}
        A_X^{\vee (1)} &\rightarrow A^\vee \\
        (-) &\mapsto (-)\cdot q^{-\rho_{L(X)}},
    \end{align*}
where here $\rho_{L(X)}$ is $\frac{1}{2}$ of the (co)weight for $\O_{2k}$ corresponding to the map $\mfsl_2\rightarrow \mf{so}_{2k}$ (coming from the map $ \O_{2a+1}\times \SL_2\rightarrow \O_{2k} $ defining $M_1$) and $q$ is the cardinality of the residue field. 

To describe the measure on $A_X^{\vee (1)}$, we recall that we have an isomorphism \[ M_1 \cong V_X \times^{G_X^\vee} G^\vee, \] for $V_X$ a finite-dimensional $\Z$-graded representation of $G_X^\vee$ \cite[Section 4.5]{BZSV}. We view $V_X$ as a $G_X^\vee \times \G_m$-representation, with $\G_m$ acting via the $\Z$-grading. The symplectic representation $S$ in the definition of $M_1$ is a direct summand of $V_X$ (in degree 1) in general. 
    
    The second main result is then that this measure on $A_X^{\vee (1)}$ is given by \[ \frac{1}{|W_X|} \frac{ \det (I - t|_{\g_X^\vee/\mf{a}_X^\vee}) }{\det(I-(t,q^{-1/2})|_{V_X}) } dt, \] for $dt$ the usual Haar measure on $A_X^{\vee (1)}$.

In more informal terms, the local unramified Plancherel density associated to $M_2$ is therefore related to a local $L$-value \[ \prod_{i\in\Z} L(i/2, V_X^{(i)})\] associated to $M_1$.

\vskip 5pt

\subsection{Method of proof} We now say a few words on the method of proof. The main tool employed, as in \cite{GW}, is that of the theta correspondence, in particular the $L^2$-theta correspondence \cite{Sa}. A rather general framework for addressing such problems concerning Plancherel decomposition (and global periods), using theta correspondence, was first laid out in a paper of the first author and Wan \cite{GW2}, and indeed we largely follow the general framework there. 

However, in \cite{GW2}, the relevant spectral decomposition is shown using a different method (first done in \cite{GG}), which, while the natural argument in that context, does not generalise to the more general cases we consider. Our approach in this paper differs in this key regard; we show the spectral decomposition in a way which is not only more general, but which is also parallel to the treatment of the smooth local and global aspects of the problem. This parallel in the treatment of the smooth local, $L^2$-local and global aspects via theta correspondence was not previously noticed in \cite{GW2}, and is in a sense the main new contribution of this paper. Furthermore, the generality of the method means that it can be similarly applied without much difficulty to the other cases considered in \cite[Section 7]{GW}, and may potentially find application to other new cases beyond those considered in \cite{GW} (since, for instance, it may be easily adapted to the case of unitary dual pairs). 

\vskip 5pt
    
\subsection{Global periods} Finally, as an application of the local (unramified) results shown in this paper, and continuing with the general framework of \cite{GW2} in the global setting, one is also able to show the numerical conjecture of \cite{BZSV} concerning global periods, as stated in the simplest case (i.e. that considered in \cite{SV}) where the $\SL_2$-type of the automorphic form and the hyperspherical (dual) space coincide, for automorphic forms on $G$ which are cuspidal and in the image of global theta lifting from (cuspidal) automorphic representations of the relevant dual group $G_X$. We refer to Theorem \ref{thm:GlobalPeriod} for the precise statement. 

\subsubsection{Degenerate Whittaker periods} In \cite{MWZ}, a relative trace formula approach was proposed to approach the aforementioned numerical conjecture of \cite{BZSV} concerning global periods (stated in \cite{MWZ} as Conjecture 1.1). In doing so, they proposed several key conjectures \cite[Conjectures 1.4, 1.8, 1.11]{MWZ} concerning the so-called ``degenerate Whittaker periods", which in the language of this paper (Section \ref{results_gw}) corresponds to the period obtained when $H$ is taken to be the trivial group. 

One of the aims of this paper is to also show how these conjectures can similarly be addressed effectively via the theta correspondence, at least in the `hook type' case considered here (but again, we expect the methods to carry over to other similar cases which can be addressed via theta correspondence). This includes the rank 1 spherical varieties $\O_{2k-1} \backslash \O_{2k}$ as special cases, which hence also addresses \cite[Conjecture 1.11]{MWZ}. This will be done in Section \ref{degenerate_whittaker} and we refer the reader there for the precise statements. 

A key obstacle in addressing these conjectures is that of defining the local relative characters for the degenerate Whittaker periods \cite[Remark 1.5]{MWZ}. We will see that the `local relative characters' are in fact furnished very naturally in the theta correspondence approach.

\vskip 5pt

In conclusion, this paper and  its predecessor \cite{GW} have achieved the full analog of the results of \cite{GW2} (shown for the rank 1 spherical variety $\O_k \backslash \O_{k+1}$) in the context of the hyperspherical varieties of $\O_{2k}$ associated to hook type partitions. 

\vskip 5pt

\subsection{Acknowledgements} We would like to thank Yiannis Sakellaridis and Akshay Venkatesh for illuminating discussions about \cite{BZSV} during the course of this work. The first author thanks Chen Wan for bringing the conjectures of \cite{MWZ} to his attention at the Tianyuan Conference 2024. The second author would also like to thank Nhat Hoang Le for some helpful conversations while the work on this paper was ongoing.  We also thank the referee for his/her careful reading of our article and for pointing out a couple of inaccuracies. 
W.T. Gan is partially supported by a Singapore government MOE Tier 1 grant R-146-000-320-114 and a Tan Chin Tuan Centennial Professorship.

\section{Preliminaries}\label{sec:Notation}

Throughout, let $F$ be a non-Archimedean local field (of characteristic 0) with residual characteristic not 2, with ring of integers $\o_F$ and uniformizer $\varpi_F$. Denote by $\kappa_F$ the residue field of $F$ and $q$ its cardinality. Fix a non-trivial unitary character $\psi : F\rightarrow \C^\times$ with conductor $\o_F$. Then $\psi$ determines an additive self-dual Haar measure on $F$, under which $\o_F$ has volume 1, which we fix throughout. 

In this paper, all classical groups are split, unless otherwise stated. 

\subsection{The case under consideration} Recall that the main aim of this paper is to show the following duality, conjectured in \cite[Section 7]{GW}, in the setting of the local (unramified) Plancherel measure, as explicated in \cite[Proposition 9.2.1]{BZSV}.

\begin{thm2}\label{thm2:EvenOrthogonal2} \cite[Section 7.1]{GW}
The hyperspherical varieties $M_1,M_2$ defined respectively by 
\begin{itemize}
    \item the datum $\O_{2a+1}\times \SL_2\rightarrow \O_{2k}$, corresponding to the nilpotent orbit with partition $[2k-2a-1,1^{2a+1}]$, and trivial $S$;\\
    
    \item and the datum $\O_{2k-2a+1}\times \SL_2\rightarrow \O_{2k}$, corresponding to the nilpotent orbit with partition $[2a-1, 1^{2k-2a+1}]$, and trivial $S$.
\end{itemize}
are dual under the expected \cite{BZSV}-duality of hyperspherical varieties. 
    
\end{thm2}

In this paper, we will focus on the Plancherel measure associated to $M_2$ (and describe it in terms of $M_1$); the dual problem involving the Plancherel measure associated to $M_1$ is completely similar. 

Similarly, the other cases considered in \cite[Section 7]{GW} can be handled in a similar manner.

\subsection{Notation}\label{notation}

We will use all the notation and set-up of \cite[Section 2]{W}, specialised to the case of the dual pair $(G,G')$, with $G'=\Sp_{2a}$ the isometry group of a $2a$-dimensional symplectic space $V'$, and $G=\O_{2k}$ the isometry group of a $2k$-dimensional orthogonal space $V$ which we assume to be the direct sum of $k$ hyperbolic planes. Note that, in the notation of the introduction, we have $G'=G_X$. 

\begin{rem}
    Strictly speaking, this specialisation is not needed until Section \ref{sec:BasicFunction}, but we do it now to fix ideas and streamline the exposition. 
\end{rem}

We also specialise to the case of the partitions (and corresponding nilpotent orbits) $\gamma'=[2a]$, $\gamma=[2a-1,1^{2k-2a+1}]$ (both even nilpotent orbits). Furthermore we take the nilpotent element $e'$ in $\mf{sp}_{2a}$ to be the `standard' regular nilpotent element.

To be precise, this means that we have a decomposition \[ V' = V'_{-(2a-1)} \oplus\cdots\oplus V'_{-1}\oplus V'_1 \oplus\cdots \oplus V'_{2a-1}, \] into one-dimensional subspaces, such that $V'_{-(2a-1)} \oplus\cdots\oplus V'_{-1}$ and $V'_1 \oplus\cdots \oplus V'_{2a-1}$ are isotropic subspaces, and there are basis vectors $v'_i\in V'_i$, with $\langle v'_i, v'_{-i} \rangle =1$ for all $i>0$. Then $e'\in \End(V')$ sends $v'_i\mapsto v'_{i+2}$ for $i=-(2a-1),\dots,2a-3$, and $v'_{2a-1}\mapsto 0$. 

We have the similar decomposition \[ V = V_{-(2a-2)}\oplus\cdots \oplus V_{-2}\oplus V_0\oplus V_2\oplus\cdots\oplus V_{2a-2}, \] with $\dim V_0 = 2k-2a+2$, $V_{-(2a-2)}\oplus\cdots \oplus V_{-2}$ and $V_2\oplus\cdots\oplus V_{2a-2}$ isotropic, $V_i\oplus V_{-i}$ a hyperbolic plane (in the obvious way) for all $i\ne 0$, and the (split) form on $V$ restricting to a non-degenerate (split) form on $V_0$. 

Set $r=2a-2$. As in \cite[Section 2.2.2]{W}, define a flag $(\overline{V}_t)$ of subspaces of $V$ by \[ \overline{V}_t := \bigoplus_{j=-r}^t V_j, \] then we have a parabolic $P=MU$ which is the stabiliser of this flag. One has, as in \cite[Section 2.2]{W}, a unitary character $\chi_\gamma$ of $U$.

Denote, for $0\le m\le r$, \[ V_{(m)} := V_{-m}\oplus\cdots\oplus V_m, \] and $G_{(m)} = G(V_{(m)})$, the isometry group of $V_{(m)}$.

For $1\le m\le r$, denote by $P_m$ the stabiliser of $V_{-m}$ in $G_{(m)}$; it is a parabolic subgroup of $G_{(m)}$ with $P_m = M_{(m)} U_m$, where $M_{(m)} = M_m \times G_{(m-1)}$ with $M_m\cong \GL(V_{-m})$. If $U_{(m)} := U \cap G_{(m)}$, then one has \[ U_{(m)}=\big( (U_1\ltimes U_2)\ltimes \cdots \big) \ltimes U_m. \]

One may define $\chi_{\gamma,m} = \chi_\gamma |_{U_m}$, and then clearly $\chi_\gamma = \chi_{\gamma,1}\cdots \chi_{\gamma,r}$.

For each $m$, the Lie algebra $\mf{u}_m$ has a vector space decomposition
 \[ \mf{u}_{m} \cong \mf{q}_{m} \oplus \mf{z}_{m} \cong \Hom(V_{(m-1)},V_{-m}) \oplus \mf{z}_{m}.  \] 
 Here $\mf{z}_{m}$ is a Lie subalgebra which  is central in $\mf{u}_m$, and given by
   \[ \mf{z}_m = \{ z\in \Hom(V_m, V_{-m}) | z^\ast = -z\} \subseteq \g_{-2m}, \] 
   while the isomorphism $ \Hom(V_{(m-1)},V_{-m})\cong \mf{q}_{m} $ is provided by 
   \[ q\mapsto q-q^\ast,  \]
where $\ast$ denotes the adjoint or dual linear map. 

One has a corresponding short exact sequence \[ 1\rightarrow Z_m \rightarrow U_m \rightarrow Q_m \rightarrow 1.\]

One makes similar definitions for the group $G'$, except that $P'_m$ is the stabiliser of $V'_{m}$ in $G'_{(m)}$, and we have the identification (again as vector spaces) \[ \mf{q}'_{m} \cong \Hom(V'_{m},V'_{(m-1)}). \] 

\vskip 5pt

As in \cite[Section 2.4]{W}, write $\omega_{(l),(m)}$ for the Weil representation associated to the dual pair $(G_{(l)}, G'_{(m)})$; it is realised on the space of Schwarz functions on a Lagrangian $Y_{(l),(m)}$ of the symplectic vector space $\Hom(V_{(l)},V'_{(m)})$. 

We choose Lagrangian subspaces once and for all in the following way. Choose a Lagrangian $Y_{(0),(0)}$ of $\Hom(V_{(0)},V'_{(0)})$. Then for each $0\le j\le r$, set \[ Y_{(j),(j+1)} = \Hom(V_{(j)}, V'_{j+1}) \oplus Y_{(j),(j)} \] and for each $1\le j\le r$, set \[ Y_{(j),(j)} = \Hom(V_{-j}, V'_{(j)}) \oplus Y_{(j-1),(j)}. \] 

\vskip 5pt

Finally, as in \cite[Proposition 2.3]{W}, we have an element $f\in\Hom(V,V')$ facilitating the moment map transfer of nilpotent orbits, with \[ f = \bigoplus_{j=-r}^r f_j \] (here $r=2a-2$) and $f_j\in\Hom(V_j,V'_{j+1})$ for each $j$. We have that $f_0^\ast v'_{-1}$ is of norm 1 in $V_0$, and $(f_0^\ast v'_{-1})^\perp = \ker f_0$. Recall that $L$ is the isometry group of this subspace $(f_0^\ast v'_{-1})^\perp = \ker f_0$ of dimension $2k-2a+1$. 

Denote
 \[ X:=LU, \overline{\chi_\gamma}\bs G, \] 
 and recall that $M_2$ is a twisted cotangent bundle of $X$. More precisely,  $X$ represents the data of the space $LU\bs G$, together with the unitary character $\overline{\chi_\gamma}$ of $LU$ (with $L$ acting trivially);  see also \cite[Section 3.2.1]{BZSV} for more on twisted cotangent bundles.

\subsection{Measures} We take on the groups $G=G_{(2a-2)},\dots,G_{(0)},L$ (which are smooth group schemes over $\o_F$), invariant, integral volume forms which are residually non-vanishing. This defines a Haar measure on $X$ (also referred to as \textit{`Tamagawa measure'} in \cite{BZSV}), under which \[ \vol(X(\o_F)) = q^{-\dim X} |X(\kappa_F)|.  \]

This differs from the normalisation ultimately used in \cite[Section 9]{BZSV}, but we shall return to this relatively minor issue later (in Section \ref{sec:BasicFunction}). 

In general, on all spaces we will work with the measure associated to an invariant, integral volume form which is residually non-vanishing, or `Tamagawa measure'.

\section{Spectral decomposition}\label{sec:Spectral}

The aim of this section, which is the technical heart of this paper, is to show a spectral decomposition for (the inner product on) $L^2(X)=L^2(LU, \overline{\chi_\gamma}\bs G)$. 

\subsection{Smooth setting} Recall that the smooth Weil representation $\omega^\infty = \omega_{(r),(r+1)}^\infty$ (where here, $r=2a-2$) associated to the dual pair $(G',G)$, is realised on the space $\S(Y_{(r),(r+1)})$ of Schwarz functions on the Lagrangian $Y_{(r),(r+1)}$ of $\Hom(V,V')$. See \cite[Section 2.4]{W} for explicit formulas for the action on the Weil representation. 

Recall also that the main result of \cite{GZ} is the following (in the smooth setting)

\begin{thm}\label{thm:SmoothMain}
The map   \begin{align*}
    p : \omega^\infty = \S(Y_{(r),(r+1)}) &\rightarrow \S(LU, \overline{\chi_\gamma}\bs G)\\
    \Phi &\mapsto \phi:= \big(g \mapsto (g\cdot\Phi)(f)\big)
\end{align*} 
is surjective, and induces the isomorphism $(\omega^\infty)_{U',\chi_{\gamma'}} \cong \S(LU, \overline{\chi_\gamma}\bs G) $.
\end{thm}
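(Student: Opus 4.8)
The plan is to realise this as a statement about twisted Jacquet modules (equivalently, coinvariants) of the Weil representation, exploiting the known "mixed model" / Fourier--Jacobi type geometry of the dual pair $(\Sp_{2a}, \O_{2k})$ and proceeding by induction on the rank $m$ running from $0$ to $r=2a-2$. First I would recall, from the cited result of \cite{GZ} (i.e. \thmref{thm:SmoothMain}), that the map $p$ exists and is equivariant, so the content to be reproven here is (a) surjectivity of $p$ and (b) that the kernel is exactly the kernel of the coinvariants projection $\omega^\infty \twoheadrightarrow (\omega^\infty)_{U',\chi_{\gamma'}}$. Since $p$ factors through the coinvariants (because the datum $f$ is fixed by $U'$ up to the character $\chi_{\gamma'}$ — this is exactly the content of the moment-map transfer element $f$ of \cite[Prop.~2.3]{W}, whose component $f_0$ satisfies $f_0^\ast v'_{-1}$ of norm $1$ and $(f_0^\ast v'_{-1})^\perp = \ker f_0$), one gets a factorisation $(\omega^\infty)_{U',\chi_{\gamma'}} \to \S(LU,\overline{\chi_\gamma}\backslash G)$, and the task reduces to showing this induced map is an isomorphism.

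Next I would set up the induction using the tower of groups $G_{(0)} \subseteq G_{(1)} \subseteq \cdots \subseteq G_{(r)} = G$ and $G'_{(0)} \subseteq \cdots \subseteq G'_{(r)} = G'$, together with the compatible choices of Lagrangians $Y_{(j),(j)}$, $Y_{(j),(j+1)}$ fixed in the Preliminaries. The key algebraic input is a "step" lemma: for each $m$, taking $(U'_m, \chi_{\gamma',m})$-coinvariants of the Weil representation $\omega_{(m),(m)}$ (or $\omega_{(m-1),(m)}$, depending on the parity of the step in the chain) produces the next Weil representation in the tower twisted by a Schwartz-induction along the unipotent radical $U_m$ with its character $\chi_{\gamma,m}$ — concretely, using the vector-space decompositions $\mf{u}_m \cong \mf{q}_m \oplus \mf{z}_m \cong \Hom(V_{(m-1)}, V_{-m}) \oplus \mf{z}_m$ and $\mf{q}'_m \cong \Hom(V'_m, V'_{(m-1)})$ recorded above. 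The mechanism is the standard one: when one computes the $(Z_m, \text{trivial})$- and $(Q_m,\psi)$-type Jacquet modules of a Weil representation realised on Schwartz functions on a Lagrangian, the Heisenberg group $Z_m$ acts through a character on each Fourier mode and the surviving mode is a Schwartz space on a smaller Lagrangian; matching this against the corresponding computation on the $G'$ side, via the explicit $f_j \in \Hom(V_j, V'_{j+1})$, yields precisely the recursion $(\omega_{(m),(m+1)}^\infty)_{U'_{m+1},\chi_{\gamma',m+1}} \cong \text{ind}_{U_{m+1}}^{?}\big(\,\cdots\,\big)$ with the character $\overline{\chi_{\gamma,m+1}}$. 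Iterating from $m=0$ up to $m=r$ and using $\chi_\gamma = \chi_{\gamma,1}\cdots\chi_{\gamma,r}$, $U_{(m)} = ((U_1 \ltimes U_2)\ltimes \cdots)\ltimes U_m$, one telescopes to the asserted $\S(LU, \overline{\chi_\gamma}\backslash G)$, the base case being the identification of the $(0)$-level Weil representation restricted along $f_0$ with functions on $L\backslash G_{(0)}$, which is exactly the classical seesaw/doubling identity $(\omega_{(0),(0)})_{\text{something}} \cong \S(L\backslash G_{(0)})$ coming from $L = G((f_0^\ast v'_{-1})^\perp)$.

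The main obstacle I anticipate is the bookkeeping of the interleaved coinvariant computations: at each stage one is alternately killing a piece of $U'$ (on the symplectic side) and extracting Schwartz-induction data for a piece of $U$ (on the orthogonal side), and the two towers are shifted relative to one another (note $P'_m$ stabilises $V'_m$ while $P_m$ stabilises $V_{-m}$, and the moment-map element $f$ shifts degree by $2$), so one must be careful that the Lagrangians $Y_{(j),(j+1)} = \Hom(V_{(j)}, V'_{j+1}) \oplus Y_{(j),(j)}$ and $Y_{(j),(j)} = \Hom(V_{-j}, V'_{(j)}) \oplus Y_{(j-1),(j)}$ are chosen exactly so that each coinvariants step strips off one of these summands cleanly, with no nontrivial Fourier-mode contributions left over. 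A secondary technical point is exactness: I would want each coinvariants functor in the chain to be exact (true for Jacquet functors along unipotent radicals in the smooth category) so that surjectivity propagates up the tower and no completion/closure issues arise; the fact that we are in the smooth (not $L^2$) setting here is what makes this clean, and the $L^2$ refinement is deferred to later sections. Finally, one needs the genuine-vs-nongenuine and metaplectic cover bookkeeping (the Weil representation depends on $\psi$ and lives on a cover) to be consistent across the tower — this is routine given the conventions of \cite{W} but should be remarked on.
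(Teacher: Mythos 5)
The paper does not actually prove this statement: it is quoted verbatim as the main theorem of \cite{GZ}, so there is no internal proof to compare against. Your sketch is essentially the Gomez--Zhu argument --- an iterated twisted-Jacquet-module computation down the tower $U'_{r+1},\dots,U'_1$ in the mixed models, using the filtration $1\to Z'_m\to U'_m\to Q'_m\to 1$ and the moment-map element $f=\oplus_j f_j$ --- and it runs exactly parallel to the computation the paper does carry out in Section~\ref{sec:InnerProduct} for the $L^2$ inner product, where the same stratification by the level sets $F_rF_r^\ast=c$ and the identification of the relevant stratum as a single $G_{(r)}$-orbit with stabiliser $G_{(r-1)}U_r$ appear. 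The one step in your outline carrying the real content, which you should not leave implicit, is the vanishing of the contributions from the strata with $F_rF_r^\ast\neq 0$ (resp.\ $\neq e'$ at the base case) in the $(Z'_m,\chi_{\gamma',m})$-coinvariants, together with the exact sequence of Schwartz spaces for the closed embedding of the relevant stratum; this is what makes the induced map on coinvariants injective rather than merely well-defined and surjective. With that supplied, your plan is a faithful reconstruction of the cited proof.
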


Write $\phi=p(\Phi)$, $\phi_1=p(\Phi_1)$, etc. throughout.

\subsection{Plancherel theorems}\label{sec:PlancherelRecall} Recall now the spectral decomposition of the Weil representation $\omega$ as first explicated in \cite{Sa} and later also in \cite[Section 5]{GW2}, which may (equivalently) be formulated as a spectral decomposition of the inner product $\langle \cdot,\cdot\rangle_\omega$ on $\omega$, that is, \[ \langle \Phi_1, \Phi_2 \rangle_\omega = \int_{\widehat{G'}}\langle \theta_\sigma(\Phi_1), \theta_\sigma(\Phi_2) \rangle_{\sigma\otimes\theta(\sigma)} \mathop{d\mu_{G'}(\sigma)} \] for $\Phi_1,\Phi_2\in\omega^\infty$, where $\widehat{G'}$ denotes (as usual) the unitary dual of $G'$, $d\mu_{G'}$ the Plancherel measure of $G'$ (associated to the Haar measure on $G'$), and $\theta_\sigma$ is the canonical map \[ \theta_\sigma: \omega^\infty \rightarrow \sigma\otimes \theta_\psi(\sigma) \] obtained from the smooth theta correspondence. (Recall that we fix the additive character $\psi$, and so suppress it from all notation.)

Recall also the Whittaker-Plancherel theorem for $(U',\chi_{\gamma'})\bs G'$ (as explicated in \cite[Section 3.2]{GW2} for instance), which furnishes canonical Whittaker functionals \[ l_\sigma \in \Hom_{U'}(\sigma,\chi_{\gamma'}), \] with \[ l_\sigma \otimes \overline{l_\sigma} : v_1\otimes v_2 \mapsto \int_{U'}^\ast \overline{\chi_{\gamma'}(u')} \langle u'\cdot v_1, v_2 \rangle_\sigma du', \] where $^\ast$ here indicates that the integral should be suitably regularised as in \cite{LM}.

From Theorem \ref{thm:SmoothMain}, it is clear that the below commutative square can be completed by a map \[ \alpha_{\theta(\sigma)} : \S(LU,\overline{\chi_\gamma}\bs G) \rightarrow \theta(\sigma). \] 
The question now is whether this (family of) maps also facilitates the spectral decomposition of $X$, or equivalently the inner product $\langle \cdot,\cdot\rangle_X$. This is addressed in the next subsection. 

\[\begin{tikzcd}
& \S(Y_{(r),(r+1)}) \arrow[dl,"{p}"]  \arrow[rd, "{\theta_\sigma}"] &\\
\S(LU,\overline{\chi_\gamma}\bs G)\arrow[dr,"\alpha_{\theta(\sigma)}"] & & \sigma\otimes\theta(\sigma) \arrow[ld,"{l_\sigma}"]\\
& \theta(\sigma) & 
\end{tikzcd}\]

This is the key point of departure from the previous work of \cite{GW2} on Plancherel decomposition (à la Sakellaridis-Venkatesh). There, the spectral decomposition of $L^2(X)$ was first shown in a different manner, which does not generalise to the cases we consider here, and only afterwards compared with the above commutative diagram. Here, we show such a spectral decomposition in a more general manner which is in fact parallel to the (smooth) local and global aspects of the problem.

\subsection{Key argument} In the next Section \ref{sec:InnerProduct}, we will show that

\begin{prop}\label{prop:InnerProduct2}
For $\Phi_1,\Phi_2\in\omega^\infty$, one has \[  \langle \phi_1,\phi_2 \rangle_X = \int_{U'} \overline{\chi_{\gamma'}(u')} \langle u'\cdot \Phi_1, \Phi_2 \rangle_\omega du'. \]
\end{prop}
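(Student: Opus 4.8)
The plan is to relate the inner product on $X = LU,\overline{\chi_\gamma}\backslash G$ to the inner product on the Weil representation $\omega$ by unfolding the $\chi_{\gamma'}$-twisted integral over $U'$, using the explicit realisation of the map $p$ from Theorem \ref{thm:SmoothMain}. First I would write out $\langle \phi_1,\phi_2\rangle_X$ as an integral over $LU\backslash G$ (or a suitable section thereof) of $\phi_1(g)\overline{\phi_2(g)}$ against the chosen Tamagawa measure, and substitute $\phi_i(g) = (g\cdot\Phi_i)(f)$. The key point is that the pairing $\langle \Phi_1,\Phi_2\rangle_\omega$ on the Weil representation, realised on $\S(Y_{(r),(r+1)})$, is an $L^2$-pairing of Schwarz functions, and evaluation-at-$f$ composed with the $G$-action is, up to the Weil-representation cocycle, essentially a partial Fourier transform; so the right-hand side $\int_{U'}\overline{\chi_{\gamma'}(u')}\langle u'\cdot\Phi_1,\Phi_2\rangle_\omega\,du'$ should, after expanding, collapse to an integral over the same space $LU\backslash G$.

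\textbf{Key steps in order.} (1) Use Theorem \ref{thm:SmoothMain} to identify $\S(LU,\overline{\chi_\gamma}\backslash G)$ with the twisted Jacquet module $(\omega^\infty)_{U',\chi_{\gamma'}}$, and record how the $G$-invariant inner product on $X$ is transported to a pairing on this Jacquet module. (2) On the Weil representation side, express $\langle u'\cdot\Phi_1,\Phi_2\rangle_\omega$ as an integral over the Lagrangian $Y_{(r),(r+1)}$, and interchange it with the integral over $U'$; the regularised integral $\int_{U'}^\ast\overline{\chi_{\gamma'}(u')}(\cdots)\,du'$ is exactly the operation producing the twisted coinvariants, matching the Whittaker functional $l_\sigma\otimes\overline{l_\sigma}$ structure recalled in Section \ref{sec:PlancherelRecall}. (3) Identify the resulting expression, via the moment-map element $f = \bigoplus_j f_j$ and the compatible choice of Lagrangians $Y_{(j),(j)}, Y_{(j),(j+1)}$, with the orbit integral defining $\langle\cdot,\cdot\rangle_X$, checking that the measures match under the `Tamagawa measure' normalisations fixed in Section \ref{sec:Notation}. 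This last matching is where the careful bookkeeping of the flag $(\overline V_t)$, the subgroups $U_m$, $U'_m$, and the decompositions $\mf u_m \cong \mf q_m\oplus\mf z_m$ is needed — one works through the tower $m = 1,\dots,r$ one step at a time, peeling off one $\GL$-factor at a time as in \cite{W}.

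\textbf{Main obstacle.} I expect the hard part to be the convergence/regularisation issue: neither $\langle u'\cdot\Phi_1,\Phi_2\rangle_\omega$ integrated against $\overline{\chi_{\gamma'}}$ over the full (non-compact, unipotent) $U'$, nor the orbit integral defining $\langle\cdot,\cdot\rangle_X$, is absolutely convergent a priori, so the Fubini-type interchange in step (2) must be justified — presumably by the stationary-phase / regularisation techniques of \cite{LM} already invoked in Section \ref{sec:PlancherelRecall}, or by first restricting to a dense subspace of $\omega^\infty$ (e.g. functions supported on suitable compact sets, or after applying enough of the $U'$-action) where everything converges, and then passing to the limit. The combinatorial matching of Haar measures through the tower of parabolics is routine but error-prone; the genuinely delicate analytic input is controlling these regularised unipotent integrals and showing the interchange is legitimate.
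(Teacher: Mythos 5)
Your strategy coincides with the paper's proof: the paper likewise substitutes $\phi_i(g)=(g\cdot\Phi_i)(f)$, justifies the interchange by noting (via \cite{LM}, Proposition 2.3) that the $U'$-integral \emph{stabilises} over large compact open subgroups — after which Fubini with the compactly supported $\Phi_2$ is immediate — and then unfolds iteratively through the tower $U'_{r+1},\dots,U'_1$ exactly as in \cite{W}, with each central $Z'_m$-integral localising the Lagrangian integral onto the single $G$-orbit of the moment-map component of $f$ and thereby converting it into an orbit integral, terminating at $LU\backslash G$. The analytic obstacle you flag is resolved precisely by the \cite{LM} stabilisation mechanism you anticipate.
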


Therefore, assuming this proposition for the moment, we see that in order to analyse the inner product $\langle\cdot,\cdot\rangle_X$, it is enough to consider the integral on the RHS, and we do so first. 

From the above spectral decomposition of the Weil representation $\omega$, we have that \[ \int_{U'} \overline{\chi_{\gamma'}(u')} \langle u'\cdot \Phi_1, \Phi_2 \rangle_\omega du' = \int_{U'} \overline{\chi_{\gamma'}(u')} \int_{\widehat{G'}}\langle \theta_\sigma(u'\cdot \Phi_1), \theta_\sigma(\Phi_2) \rangle_{\sigma\otimes\theta(\sigma)} \mathop{d\mu_{G'}(\sigma)} \mathop{du'} \]

We would now like to justify an interchange in the order of integration.

To that end, consider first the integral \begin{align*}  &\int_{\widehat{G'}} \int_{U'}^\ast \overline{\chi_{\gamma'}(u')} \langle \theta_\sigma(u'\cdot \Phi_1), \theta_\sigma(\Phi_2) \rangle_{\sigma\otimes\theta(\sigma)} \mathop{du'} \mathop{d\mu_{G'}(\sigma)}  \\ &= \int_{\widehat{G'}} \langle l_\sigma(\theta_\sigma( \Phi_1)), l_\sigma(\theta_\sigma(\Phi_2)) \rangle_{\theta(\sigma)} \mathop{d\mu_{G'}(\sigma)}  \end{align*}

We now make two key observations:
\begin{itemize}
    \item Since $\Phi_1,\Phi_2\in\omega^\infty$ (hence fixed by a compact open subgroup), by \cite[Proposition 2.3]{LM}, the inner integral over $U'$ may (and should be interpreted as being) replaced by an integral over any compact open subgroup $U_x'$ containing a fixed compact open subgroup $U_0'$ of $U'$,  which depends only on $\Phi_1,\Phi_2$ (and not on $\sigma$).

    \item Also since $\Phi_1,\Phi_2\in\omega^\infty$ (hence fixed by a compact open subgroup), the integral over $\widehat{G'}$ is compactly supported (cf. \cite[Théorème VIII.1.2]{Wa}). %By \cite[Proposition 2.14.2]{BP},
\end{itemize}

Therefore (the integral exists and) we may interchange the order of integration; in other words, we obtain 

\begin{prop}\label{prop:SpectralDecomp}
We have 
    \[ \int_{U'} \overline{\chi_{\gamma'}(u')} \langle u'\cdot \Phi_1, \Phi_2 \rangle_\omega \mathop{du'} = \int_{\widehat{G'}} \langle l_\sigma(\theta_\sigma( \Phi_1)), l_\sigma(\theta_\sigma(\Phi_2)) \rangle_{\theta(\sigma)} \mathop{d\mu_{G'}(\sigma)}. \]
\end{prop}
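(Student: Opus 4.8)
The plan is to run the manipulation already outlined in the text, extracting the identity from three inputs: the Plancherel decomposition of $\langle\cdot,\cdot\rangle_\omega$ recalled in Section~\ref{sec:PlancherelRecall}, the Whittaker--Plancherel formula for the matrix coefficients of $\sigma$, and a Fubini argument. First I would apply the spectral decomposition of the Weil representation with $\Phi_1$ replaced by $u'\cdot\Phi_1$ (for each fixed $u'\in U'$, noting $u'\cdot\Phi_1\in\omega^\infty$ since $\omega^\infty$ is $G'$-stable), obtaining
\[ \langle u'\cdot\Phi_1,\Phi_2\rangle_\omega = \int_{\widehat{G'}}\langle\theta_\sigma(u'\cdot\Phi_1),\theta_\sigma(\Phi_2)\rangle_{\sigma\otimes\theta(\sigma)}\,\mathop{d\mu_{G'}(\sigma)}, \]
then multiply by $\overline{\chi_{\gamma'}(u')}$ and integrate over $U'$. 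Since $\theta_\sigma$ is $G'$-equivariant, $\theta_\sigma(u'\cdot\Phi_1) = (\sigma(u')\otimes\id)\theta_\sigma(\Phi_1)$, where $u'$ acts through the $\sigma$-factor. Granting the interchange of the $U'$- and $\widehat{G'}$-integrals for the moment, the inner integral over $U'$ becomes, for each fixed $\sigma$,
\[ \int_{U'}^\ast\overline{\chi_{\gamma'}(u')}\,\langle(\sigma(u')\otimes\id)\theta_\sigma(\Phi_1),\theta_\sigma(\Phi_2)\rangle_{\sigma\otimes\theta(\sigma)}\,\mathop{du'} = \langle l_\sigma(\theta_\sigma(\Phi_1)),l_\sigma(\theta_\sigma(\Phi_2))\rangle_{\theta(\sigma)}. \]
This last equality is the Whittaker--Plancherel formula for $l_\sigma\otimes\overline{l_\sigma}$ recalled in Section~\ref{sec:PlancherelRecall}, applied to the first tensor factor: the inner product on $\sigma\otimes\theta(\sigma)$ is the product of the two inner products, $u'$ acts only on $\sigma$, and on decomposable vectors $v\otimes t$ one has $\int_{U'}^\ast\overline{\chi_{\gamma'}(u')}\langle\sigma(u')v_1,v_2\rangle_\sigma\,du'=l_\sigma(v_1)\overline{l_\sigma(v_2)}$, which reassembles into $\langle l_\sigma(\cdot),l_\sigma(\cdot)\rangle_{\theta(\sigma)}$ by sesquilinearity and density. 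Integrating over $\widehat{G'}$ and invoking Proposition~\ref{prop:InnerProduct2} then yields the stated identity.

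The substantive point is the justification of the interchange of integration, and here I would use the two observations already listed in the text. Since $\Phi_1,\Phi_2\in\omega^\infty$ are fixed by a compact open subgroup $J'\subseteq G'$ (in the $G'$-variable), the vector $\theta_\sigma(\Phi_1)$ lies in $(\sigma\otimes\theta(\sigma))^{J'}$ with $J'$ acting on $\sigma$, so \cite[Proposition~2.3]{LM} shows that the regularised integral $\int_{U'}^\ast$ stabilises to a genuine integral over any compact open subgroup $U'_x\supseteq U'_0$, where $U'_0\subseteq U'$ depends only on the level $J'$ and hence is \emph{independent of $\sigma$}; crucially, since $\sigma(u')$ is a bijection of $\sigma$, the vector $\theta_\sigma(u'\cdot\Phi_1)$ is nonzero precisely when $\sigma^{J'}\neq 0$, uniformly in $u'$. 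Moreover, again by $K$-finiteness, the integrand $\sigma\mapsto\langle\theta_\sigma(u'\cdot\Phi_1),\theta_\sigma(\Phi_2)\rangle$ is supported on the set $\{\sigma\in\widehat{G'}:\sigma^{J'}\neq 0\}$, which is a fixed compact subset of $\mathrm{supp}(d\mu_{G'})$ (cf. \cite[Théorème~VIII.1.2]{Wa}), again uniformly in $u'$. After replacing the regularised $U'$-integral by the genuine integral over the fixed compact open subgroup $U'_x$, the remaining double integral is that of a bounded locally constant function over the product of a compact set in $\widehat{G'}$ with a compact open set in $U'$, so Fubini's theorem applies and the order of integration may be exchanged.

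I expect the main obstacle to be exactly this uniformity: ensuring that the compact open subgroup $U'_x$ over which $\int_{U'}^\ast$ stabilises, and the compact subset of $\widehat{G'}$ supporting the Plancherel integrand, can both be chosen independently of $\sigma$ (and, for $U'_x$, independently of $u'$), so that after the regularisation is resolved one is genuinely in the scope of Fubini. Once these uniformity statements are in place the interchange is routine, and the remaining identifications are formal consequences of the $G'$-equivariance of $\theta_\sigma$ together with the defining property of $l_\sigma$ via $l_\sigma\otimes\overline{l_\sigma}$.
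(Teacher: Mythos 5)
Your argument is correct and follows the paper's proof essentially verbatim: apply the Plancherel decomposition of $\langle\cdot,\cdot\rangle_\omega$ to $u'\cdot\Phi_1$, use the uniform (in $\sigma$) stabilisation of the regularised $U'$-integral from \cite[Proposition 2.3]{LM} together with the compact support of the $\widehat{G'}$-integral from \cite[Th\'eor\`eme VIII.1.2]{Wa} to justify Fubini, and recognise the resulting inner integral as $l_\sigma\otimes\overline{l_\sigma}$. The only (harmless) slip is your final invocation of Proposition~\ref{prop:InnerProduct2}, which is not needed for this statement — it enters only afterwards, when the two propositions are combined to obtain Theorem~\ref{thm:MainSpectral}.
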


\begin{rem}
    The interchange in order of integration could also be justified by a similar argument to that in the proof of \cite[Theorem 6.3.4]{SV}.
\end{rem}

Combining Propositions \ref{prop:InnerProduct2} and \ref{prop:SpectralDecomp}, we obtain: 

\begin{thm}\label{thm:MainSpectral} 
We have 
    \begin{align*} \langle \phi_1,\phi_2 \rangle_X &= \int_{\widehat{G'}} \langle l_\sigma(\theta_\sigma(\Phi_1)), l_\sigma(\theta_\sigma(\Phi_2)) \rangle_{\theta(\sigma)} \mathop{d\mu_{G'}(\sigma)} \\
    &= \int_{\widehat{G'}} \langle \alpha_{\theta(\sigma)}(\phi_1), \alpha_{\theta(\sigma)}(\phi_2) \rangle_{\theta(\sigma)} \mathop{d\mu_{G'}(\sigma)} \end{align*}
    for all $\phi_1,\phi_2\in\S(X)$.
\end{thm}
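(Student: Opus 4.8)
The statement I am asked to prove is \thmref{thm:MainSpectral}, which combines \propref{prop:InnerProduct2} and \propref{prop:SpectralDecomp} together with \thmref{thm:SmoothMain} to obtain the $L^2$-spectral decomposition of $L^2(X)$.

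\medskip

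\noindent\textbf{Plan of proof.}

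The plan is to simply chain together the results already established in the excerpt. First, I would start from \propref{prop:InnerProduct2}, which for $\Phi_1, \Phi_2 \in \omega^\infty$ identifies the $X$-inner product $\langle \phi_1, \phi_2 \rangle_X$ (where $\phi_i = p(\Phi_i)$) with the regularised integral $\int_{U'} \overline{\chi_{\gamma'}(u')} \langle u' \cdot \Phi_1, \Phi_2 \rangle_\omega \, du'$. Then I would apply \propref{prop:SpectralDecomp}, which rewrites precisely this integral as $\int_{\widehat{G'}} \langle l_\sigma(\theta_\sigma(\Phi_1)), l_\sigma(\theta_\sigma(\Phi_2)) \rangle_{\theta(\sigma)} \, d\mu_{G'}(\sigma)$. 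Composing the two gives the first displayed equality of the theorem. The second displayed equality then follows by chasing the commutative diagram in \secref{sec:PlancherelRecall}: by construction $\alpha_{\theta(\sigma)} \circ p = l_\sigma \circ \theta_\sigma$ as maps $\omega^\infty \to \theta(\sigma)$, so $l_\sigma(\theta_\sigma(\Phi_i)) = \alpha_{\theta(\sigma)}(p(\Phi_i)) = \alpha_{\theta(\sigma)}(\phi_i)$, and substituting this into the first equality yields the second.

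The remaining point is the passage from $\phi_i$ of the form $p(\Phi_i)$ (with $\Phi_i \in \omega^\infty$) to arbitrary $\phi_1, \phi_2 \in \S(X)$. This is exactly where \thmref{thm:SmoothMain} is used: the map $p: \omega^\infty \to \S(LU, \overline{\chi_\gamma}\backslash G)$ is surjective, so every $\phi \in \S(X)$ is of the form $p(\Phi)$ for some $\Phi \in \omega^\infty$. Hence the identity, proved for all $\Phi_1, \Phi_2 \in \omega^\infty$, descends to an identity for all $\phi_1, \phi_2 \in \S(X)$; one should note that although the $\Phi_i$ lifting a given $\phi_i$ are not unique, both sides of the asserted equality depend only on $\phi_i = p(\Phi_i)$ (the left-hand side manifestly, and the right-hand side because $l_\sigma \circ \theta_\sigma$ factors through $p$ via $\alpha_{\theta(\sigma)}$), so the descent is well-defined.

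Finally I would record that the integral makes sense and that $\S(X)$ is dense in $L^2(X)$, so that the formula indeed furnishes the desired spectral decomposition of the $L^2$-inner product; this is the $L^2$-analogue of \thmref{thm2:EvenOrthogonal} as advertised. I do not anticipate a genuine obstacle here, since all the analytic work — in particular the interchange of the order of integration and the regularisation of the $U'$-integral — has already been carried out in establishing \propref{prop:SpectralDecomp} (and \propref{prop:InnerProduct2} will be proved in \secref{sec:InnerProduct}); the only mild subtlety to be careful about is the well-definedness of the right-hand side under the choice of lift $\Phi_i$ of $\phi_i$, which is handled by the commutative diagram as above.
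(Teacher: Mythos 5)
Your proposal is correct and matches the paper's own argument, which obtains the theorem precisely by combining Propositions \ref{prop:InnerProduct2} and \ref{prop:SpectralDecomp}, with the second equality coming from the commutative diagram of Section \ref{sec:PlancherelRecall} and the passage to arbitrary $\phi_1,\phi_2\in\S(X)$ from the surjectivity of $p$ in Theorem \ref{thm:SmoothMain}. Your remark on the well-definedness of the right-hand side under the choice of lift $\Phi_i$ is a point the paper leaves implicit, but it is handled exactly as you say.
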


In other words, we have obtained a spectral decomposition of the inner product on $X=LU,\overline{\chi_\gamma}\bs G$, which we now see is facilitated by the family of maps obtained from the commutative square in the previous subsection \ref{sec:PlancherelRecall} (together with the Plancherel measure of $G'$).  

We may also state this as

\begin{thm}\label{thm:MainSpectral2}
 We have 
 \[ L^2(X) \cong \int_{\widehat{G'}} \theta_\psi (\sigma) \cdot \dim \Hom_{U'}( \sigma, \chi_{\gamma'}) \mathop{d\mu_{G'}(\sigma)}, \] 
so that the spectral measure on $L^2(X)$ is given by the pushforward, via theta lifting, of the (Whittaker-)Plancherel measure on $G'$, which is supported on the tempered, $\psi$-generic irreducible representations of $G'$. 
    
 Furthermore, the resulting family of projections  \[ \alpha_{\theta(\sigma)} : \S(X) \rightarrow \theta(\sigma) \] is that defined by the commutative diagram of Section \ref{sec:PlancherelRecall}.  
\end{thm}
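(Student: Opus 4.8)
The plan is to read off this ``spectral-measure'' reformulation from the inner-product identity of \thmref{thm:MainSpectral} by the standard disintegration argument, feeding in the Plancherel and Whittaker--Plancherel theorems for $G'$ together with the basic properties of the $L^2$-theta correspondence. First I would note that the family $\alpha_{\theta(\sigma)}$ of \secref{sec:PlancherelRecall} is $G$-equivariant: each $\alpha_{\theta(\sigma)}$ is obtained, via the surjection $p$ of \thmref{thm:SmoothMain}, from the $G$-intertwining maps $\theta_\sigma$ and $l_\sigma$ appearing in the commutative diagram there, so that $\alpha_{\theta(\sigma)}\circ p=(l_\sigma\otimes\id)\circ\theta_\sigma$. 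Consequently
\[ T:\S(X)\longrightarrow \int_{\widehat{G'}}^{\oplus}\theta(\sigma)\,d\mu_{G'}(\sigma),\qquad \phi\mapsto\big(\alpha_{\theta(\sigma)}(\phi)\big)_\sigma, \]
is a $G$-equivariant map into the Hilbert direct integral of the measurable field $\sigma\mapsto\theta(\sigma)$, the latter equipped with its natural measurable structure coming from the realisation of all the $\theta(\sigma)$ as quotients of the single space $\omega^\infty$. By \thmref{thm:MainSpectral}, $T$ is isometric on the dense subspace $\S(X)\subset L^2(X)$, hence extends to an isometric $G$-embedding $L^2(X)\hookrightarrow\int_{\widehat{G'}}^{\oplus}\theta(\sigma)\,d\mu_{G'}(\sigma)$.

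\textbf{Identification of the image.} It then remains to see that $T$ is onto, with the stated fibres. Since $p$ is surjective (\thmref{thm:SmoothMain}), $\theta_\sigma$ is surjective onto a dense subspace of $\sigma\otimes\theta(\sigma)$, and $l_\sigma\otimes\id$ maps $\sigma\otimes\theta(\sigma)$ onto a dense subspace of $\theta(\sigma)$ whenever $l_\sigma\neq0$, each $\alpha_{\theta(\sigma)}$ has dense image in the fibre $\theta(\sigma)$. A standard argument — using that $\S(X)$ generates $L^2(X)$ as a $G$-module and that the fibres are irreducible (here one invokes the $L^2$-theta correspondence of \cite{Sa}: for $\sigma$ tempered, $\theta_\psi(\sigma)$ is an irreducible unitary representation of $G$) — upgrades this to surjectivity of $T$ onto the full direct integral, yielding
\[ L^2(X)\cong\int_{\widehat{G'}}\theta_\psi(\sigma)\cdot\dim\Hom_{U'}(\sigma,\chi_{\gamma'})\,d\mu_{G'}(\sigma). \]
The multiplicity factor $\dim\Hom_{U'}(\sigma,\chi_{\gamma'})$ simply records that, as in the Whittaker--Plancherel theorem, the canonical functional $l_\sigma$ of \thmref{thm:MainSpectral} is to be read as a sum over an orthonormal basis of $\Hom_{U'}(\sigma,\chi_{\gamma'})$; for $G'=\Sp_{2a}$ split this factor is $0$ or $1$ by uniqueness of Whittaker models, so it only serves to cut down the support.

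\textbf{Support and the final assertion.} To pin down the support I would combine two inputs: the group Plancherel measure $d\mu_{G'}$ is supported on the tempered dual of $G'$ (Harish-Chandra), and $l_\sigma$ vanishes unless $\sigma$ is $\psi$-generic (equivalently $\dim\Hom_{U'}(\sigma,\chi_{\gamma'})\neq0$), so the surviving set is precisely the support of the Whittaker--Plancherel measure of $G'$, i.e.\ the tempered $\psi$-generic irreducible representations. One also checks that $\theta_\psi(\sigma)\neq0$ for all such $\sigma$, by the non-vanishing of theta lifts from $\Sp_{2a}$ to $\O_{2k}$ in this range ($k\ge a$; cf.\ \cite{Sa}, \cite{GW2}). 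Finally, the last statement — that the resulting spectral projections are exactly the $\alpha_{\theta(\sigma)}$ of \secref{sec:PlancherelRecall} — is immediate, since by construction $T$ realises the isomorphism and its fibrewise components are tautologically the $\alpha_{\theta(\sigma)}$; the essential uniqueness of the disintegration identifies these with the abstract spectral projections of $L^2(X)$.

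\textbf{Main obstacle.} The formal skeleton of the argument is a direct consequence of \thmref{thm:MainSpectral}; the point requiring genuine care is the functional-analytic bookkeeping in the second step — setting up the measurable field $\sigma\mapsto\theta_\psi(\sigma)$ and proving that the isometric embedding $T$ is actually \emph{surjective} (so that the fibres are exactly $\theta_\psi(\sigma)$ with the stated multiplicity and no part of $L^2(X)$ is lost). This rests on the irreducibility and unitarity of the $L^2$-theta lift of a tempered representation, which we import from \cite{Sa}; once that is granted, everything else follows from \thmref{thm:MainSpectral} together with the Plancherel and Whittaker--Plancherel theorems for $G'$.
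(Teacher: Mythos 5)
Your proposal is correct and follows essentially the same route as the paper: Theorem \ref{thm:MainSpectral2} is obtained there as a direct reformulation of Theorem \ref{thm:MainSpectral}, with the pointwise maps $\alpha_{\theta(\sigma)}$ of Section \ref{sec:PlancherelRecall} assembling into the direct-integral decomposition, the multiplicity factor being $0$ or $1$ by uniqueness of Whittaker models, and the support statement coming from the Whittaker--Plancherel theorem together with irreducibility of tempered $L^2$-theta lifts as in \cite{Sa}. Your extra functional-analytic bookkeeping (measurable field, surjectivity of the isometry $T$ via density of the fibrewise images and a.e.\ irreducibility and distinctness of the fibres) is exactly the standard argument the paper leaves implicit.
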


\begin{rem}
    Although we have worked exclusively with the non-Archimedean case (as is also done in \cite{BZSV}), it is to be expected that the analogous results concerning spectral decomposition are true as well in the Archimedean case, albeit with greater analytic difficulties to be considered. 
\end{rem}

\section{Inner product}\label{sec:InnerProduct}

The aim of this section (which may be thought of as an appendix to Section \ref{sec:Spectral}) is to show the following (earlier stated as Proposition \ref{prop:InnerProduct2}):

\begin{prop}\label{prop:InnerProduct}
For $\Phi\in\omega^\infty$, one has \[ \int_{U'} \overline{\chi_{\gamma'}(u')} \langle u'\cdot \Phi_1, \Phi_2 \rangle_\omega du' = \langle \phi_1,\phi_2 \rangle_X. \]
\end{prop}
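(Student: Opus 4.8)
The statement to prove relates two inner products: the $L^2$-inner product $\langle \phi_1, \phi_2 \rangle_X$ of functions on $X = LU, \overline{\chi_\gamma} \backslash G$, and a $(U', \chi_{\gamma'})$-twisted integral of the Weil representation inner product $\langle \cdot, \cdot \rangle_\omega$. The strategy is to unfold the definition of $\langle \phi_1, \phi_2 \rangle_X$ as an integral over $X$, use the explicit formula $\phi_i(g) = (g \cdot \Phi_i)(f)$ from Theorem~\ref{thm:SmoothMain}, and then recognize the resulting expression as the claimed twisted integral over $U'$ of the Weil representation pairing. The key structural input is the Gan--Zhang-type presentation (Theorem~\ref{thm:SmoothMain}), which identifies $\S(LU, \overline{\chi_\gamma}\backslash G)$ as the $(U', \chi_{\gamma'})$-twisted coinvariants of $\omega^\infty$; dualizing this, the $L^2(X)$-pairing should correspond under $p$ to the pairing on $\omega$ \emph{after} integrating out the $U'$-direction against $\chi_{\gamma'}$. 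So morally the proposition is just the ``adjoint'' statement to Theorem~\ref{thm:SmoothMain} at the level of inner products, but making this precise requires care with the regularization of the (non-convergent) $U'$-integral and with matching up Haar measures on $X$ and on $G'$.

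\textbf{Steps in order.} First I would write $\langle \phi_1, \phi_2 \rangle_X = \int_{LU \backslash G} \phi_1(g) \overline{\phi_2(g)}\, dg$ (with the twisting by $\chi_\gamma$ built into the definition of these functions on $X$), and substitute $\phi_i(g) = (g\cdot \Phi_i)(f)$. Next, I would expand $\langle \Phi_1, \Phi_2 \rangle_\omega$ and more generally $\langle u' \cdot \Phi_1, \Phi_2 \rangle_\omega$ using the explicit realization of $\omega = \omega_{(r),(r+1)}$ on $\S(Y_{(r),(r+1)})$ as an $L^2$-integral over the Lagrangian $Y_{(r),(r+1)} \subseteq \Hom(V, V')$, i.e. $\langle \Phi_1, \Phi_2\rangle_\omega = \int_{Y} \Phi_1(y)\overline{\Phi_2(y)}\, dy$. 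Then the right-hand side of the proposition becomes an iterated integral over $U'$ and over $Y$; using the explicit action of $U' \subseteq G'$ on $\omega$ (translation/multiplication by characters on the Schwartz model) together with the fact that $f$ is the chosen moment-map element, I would try to collapse the $U'$-integral against $\overline{\chi_{\gamma'}}$ into a delta-type constraint cutting $Y$ down to (a translate of) the $G$-orbit of $f$, which is precisely $LU\backslash G$. The bookkeeping here is essentially the same ``matching of models'' computation that underlies Theorem~\ref{thm:SmoothMain}, and indeed I expect the cleanest route is to reduce to that theorem: observe that both sides define $G$-invariant (sesquilinear) pairings on $\S(LU, \overline{\chi_\gamma}\backslash G) \times \S(LU, \overline{\chi_\gamma}\backslash G)$ via $p$, that such a pairing is unique up to scalar by multiplicity-one for the $L^2(X)$-spectrum (or by irreducibility considerations / uniqueness of invariant functionals), and then pin down the scalar to be $1$ by testing on a single well-chosen $\Phi$ (e.g. a Gaussian/characteristic-function of a lattice supported near $f$), where both integrals can be evaluated explicitly.

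\textbf{Main obstacle.} The crux is the analytic justification: the integral $\int_{U'} \overline{\chi_{\gamma'}(u')} \langle u' \cdot \Phi_1, \Phi_2\rangle_\omega\, du'$ is not absolutely convergent (which is exactly why the $\ast$-regularization of \cite{LM} is invoked elsewhere in the paper), whereas $\langle \phi_1, \phi_2 \rangle_X$ is a perfectly convergent integral of Schwartz functions on the (closed) orbit; so I must show that the regularized $U'$-integral exists, is independent of the regularization, and equals the convergent quantity. The cleanest way around this is to exploit that $\Phi_1, \Phi_2$ are fixed by a compact open subgroup, so by \cite[Proposition 2.3]{LM} the $U'$-integral stabilizes to an honest integral over a large enough compact open $U_0' \subseteq U'$; on such a compact set the integral converges absolutely, Fubini applies freely, and one can carry out the orbit-collapsing computation above rigorously. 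A secondary, more bookkeeping-heavy point is to confirm that the Tamagawa/residually-nonvanishing normalizations of Haar measures on $G$, $L$, $U$, $G'$, $U'$ chosen in Section~\ref{sec:Notation} are exactly compatible so that the constant comes out to be $1$ rather than a power of $q$ — I would verify this on the unramified test vector at the very end.
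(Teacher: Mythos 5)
Your primary route---unfolding $\langle\phi_1,\phi_2\rangle_X$, expanding $\langle u'\cdot\Phi_1,\Phi_2\rangle_\omega$ as an integral over the Lagrangian $Y_{(r),(r+1)}$, invoking \cite[Proposition 2.3]{LM} to stabilize the $U'$-integral to a sufficiently large compact open subgroup so that Fubini applies, and then letting the character integral cut the Lagrangian down to the $G$-orbit of $f$---is exactly the paper's proof. The one point you underestimate is that the ``collapse'' is not a single delta-function step: the paper runs an induction through the filtration $U'_1\subset\cdots\subset U'_{r+1}$, and at each stage uses the extension $1\to Z'_m\to U'_m\to Q'_m\to 1$, integrates over the fibers of the submersion $F_m\mapsto F_mF_m^\ast$ to localize to the null cone (resp.\ to $F_0F_0^\ast=e'$ at the base case), and identifies that locus as a single orbit with stabilizer $G_{(m-1)}U_m$ (resp.\ $L$ at the base case). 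The measure constant comes out to $1$ automatically from $q^n\vol(\varpi_F^n\o_F)=1$ within this computation, so no separate normalization check on an unramified test vector is needed.

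By contrast, the alternative ``cleanest route'' you float---uniqueness up to scalar of $G$-invariant sesquilinear pairings on $\S(X)$ by multiplicity one, with the scalar pinned down on one well-chosen $\Phi$---does not work. $\S(X)$ is far from irreducible: $L^2(X)$ is a direct integral of a continuous family $\theta(\sigma)$, so any positive density against the spectral measure yields another $G$-invariant pairing. Multiplicity one only fixes the pairing on each spectral component up to a scalar that may vary with $\sigma$, and a single test vector only detects an average over the spectrum. Indeed, identifying the correct spectral density is precisely the content of Theorem \ref{thm:MainSpectral2}, for which Proposition \ref{prop:InnerProduct2} is an input, so this shortcut would be circular. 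Stick with the direct iterative computation.
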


\begin{proof}
First, we remark that the integral over $U'$ is not necessarily convergent in general. But in this case, the matrix coefficient of the Weil representation lies in the Harish-Chandra Schwarz space of $G'$, and so the integral over $U'$ does at least converge. However, for our purposes, we would like to say something stronger. 

From the first key observation in the preceding section (which is by \cite[Proposition 2.3]{LM}), the integral in fact \textit{stabilises}, in the sense that there is a compact open subgroup $U_0'$ of $U'$ (depending only on $\Phi_1,\Phi_2$) such that for all compact open subgroups $U_x'$ containing $U_0'$, the integral over $U_x'$ exists and takes the same value for all $U_x'$.

\begin{rem}
    We remark that this (and the argument in the preceding section) is in essence similar to the argument used in the proof of \cite[Proposition 2.11]{LM}.
\end{rem}

Therefore we may replace the integral over $U'$ with an integral over $U_x'$ for any such (sufficiently large) compact open subgroup $U_x'$. In what follows we do not notate $U_x'$ but simply understand the integral over $U'$ to be stabilised in the sense just explained. 

Furthermore, note that by definition \[ \langle u'\cdot \Phi_1, \Phi_2 \rangle_\omega = \int_{Y_{(r),(r+1)}} (u'\cdot \Phi_1)(y) \overline{\Phi_2(y)} dy.  \] Now since $\Phi_2$ is compactly supported, we may freely interchange the integrals over $U_x'$ and $Y_{(r),(r+1)}$ throughout the computation. 

\vskip 5pt

The computation is formally analogous to that carried out in \cite{W} (and so we do not repeat some of the details which are formally similar). The idea is to compute the integral over $U^{\prime}$ by inductively, or iteratively, computing the integrals first over $U^{\prime}_{r+1}$, and so on, down to $U^{\prime}_{1}$. (Recall again here $r=2a-2$.)

Therefore, we first compute 
\begin{equation}
\label{KeyInductive} 
\int_{U'_{r+1}} \overline{\chi_{\gamma'}(u')} \langle u'\cdot \Phi_1, \Phi_2 \rangle_\omega du',
\end{equation}

Recall that we have a short exact sequence  \[ 1\rightarrow Z^{\prime}_{r+1} \rightarrow U^{\prime}_{r+1} \rightarrow Q^{\prime}_{r+1} \rightarrow 1,\] with $\chi_{\gamma',r+1}$ trivial on $Z^{\prime}_{r+1}$ (for $r\ge 1$). 

So we may write \begin{align*} (\ref{KeyInductive}) &=  \int_{Q^{\prime}_{r+1}} \overline{\chi_{\gamma', r+1}(q')} \int_{Z^{\prime}_{r+1}} \int_{Y_{(r),(r+1)}} (z'q'\cdot \Phi_1)(y) \overline{\Phi_2(y)} \mathop{dy} \mathop{dz'} \mathop{dq'} 
\\ &=  \int_{Q^{\prime}_{r+1}} \overline{\chi_{\gamma', r+1}(q')} \int_{Z^{\prime}_{r+1}} \\ & \int_{\Hom(V_{(r)}, V'_{r+1}) \bs \{0\}} \int_{Y_{(r),(r)}} (z'q'\cdot \Phi_1)(F_r)(y_{(r),(r)}) \overline{\Phi_2(F_r)(y_{(r),(r)})}  \mathop{dy_{(r),(r)}}\mathop{dF_r} \mathop{dz'} \mathop{dq'}
\end{align*}

\begin{rem}
Note that $\dim V'_{r+1}=1$, so that the non-zero elements in $\Hom(V_{(r)}, V'_{r+1})$ are precisely those with maximal rank. 
\end{rem}

We first consider the integral over $Z'_{r+1}$. From the action of $Z'_{r+1}$, we have \[ (z'q'\cdot \Phi_1)(F_r)(y_{(r),(r)}) = \psi(\Tr(z'F_rF_r^\ast /2))(q'\cdot \Phi_1)(F_r)(y_{(r),(r)}). \] 

Now the map \begin{align*} q_r : \Hom(V_{(r)}, V'_{r+1}) \bs \{0\} &\rightarrow F \\
F_r &\mapsto F_rF_r^\ast \end{align*}
is a submersion (away from the origin of $\Hom(V_{(r)}, V'_{r+1})$). 

Therefore the integration over $\Hom(V_{(r)}, V'_{r+1}) \bs \{0\}$ may be performed first by integrating over the fibers of $q_r$, followed by integration along the base, and by a completely analogous argument to that in \cite[p.161-162]{GW2}, one has \begin{align*} (\ref{KeyInductive}) &=  \int_{Q^{\prime}_{r+1}} \overline{\chi_{\gamma', r+1}(q')} \\ & \int_{\substack{F_r\in \Hom(V_{(r)}, V'_{r+1}) \bs \{0\}\\ F_rF_r^\ast = 0}} \int_{Y_{(r),(r)}} (q'\cdot \Phi_1)(F_r)(y_{(r),(r)}) \overline{\Phi_2(F_r)(y_{(r),(r)})}  \mathop{dy_{(r),(r)}}\mathop{dF_r}  \mathop{dq'}
\end{align*}

For the sake of completeness, we reproduce the argument here. Observe that it is no harm to first restrict to those $\Phi_2$ which are (locally constant and) compactly supported away from the origin of $\Hom(V_{(r)}, V'_{r+1})$, and then deduce the same for general $\Phi_2$ by a density argument.

By the observation at the start of the entire proof, we replace the integral over $Z'_{r+1}$ with an integral over a sufficiently large compact open subgroup of $Z'_{r+1}$, which we may identify with $\varpi_F^{-n}\o_F$ for sufficiently large $n$. 

Now if $F_rF_r^\ast\not\in \varpi_F^n \o_F$, then $\psi(\Tr(z'F_rF_r^\ast /2))$ is a non-trivial character (in $z'$), hence the entire integral is zero. On the other hand, if $F_rF_r^\ast \in \varpi_F^n \o_F$, then the integral over $z'$ simply gives the volume of $\varpi_F^{-n}\o_F$, which is $q^n$. 

Thus \begin{align*} (\ref{KeyInductive}) &=  \int_{Q^{\prime}_{r+1}} \overline{\chi_{\gamma', r+1}(q')}  \\ & q^n \int_{q_r^{-1}(\varpi_F^n\o_F)} \int_{Y_{(r),(r)}} (q'\cdot \Phi_1)(F_r)(y_{(r),(r)}) \overline{\Phi_2(F_r)(y_{(r),(r)})}  \mathop{dy_{(r),(r)}}\mathop{dF_r} \mathop{dq'}
\end{align*}

But the integral over fibers of $q_r$ is a locally constant function on the base. Therefore we have for all sufficiently large $n$, \begin{align*} (\ref{KeyInductive}) &=  \int_{Q^{\prime}_{r+1}} \overline{\chi_{\gamma', r+1}(q')}  \\ & q^n \vol(\varpi_F^n\o_F) \int_{q_r^{-1}(0)} \int_{Y_{(r),(r)}} (q'\cdot \Phi_1)(F_r)(y_{(r),(r)}) \overline{\Phi_2(F_r)(y_{(r),(r)})}  \mathop{dy_{(r),(r)}}\mathop{dF_r} \mathop{dq'},
\end{align*}
and from $\vol(\varpi_F^n\o_F) = q^{-n}$ we get the desired. 

\vskip 5pt

Now the $F_r\in \Hom(V_{(r)}, V'_{r+1}) \bs \{0\}$ with $F_r F_r^\ast=0$ form a single orbit under the $G=G_{(r)}$-action, and in fact we already have a representative $f_r$ in this orbit coming from the moment map transfer. (This $f_r$ is trivial when restricted to $V_{-r}\oplus\cdots\oplus V_{r-1}$, and restricts to an isomorphism $V_r \xrightarrow{\sim} V'_{r+1}$.) The stabiliser of $f_r$ in $G_{(r)}$ is $G_{(r-1)}U_r$. 

Therefore, considering the action of $g\in G_{(r)}$, we have \begin{align*} (\ref{KeyInductive}) &=  \int_{Q^{\prime}_{r+1}} \overline{\chi_{\gamma', r+1}(q')} \\ & \int_{G_{(r-1)}U_r \bs G_{(r)}} \int_{Y_{(r),(r)}} \big(\omega_{(r),(r)}(g_r^{-1})(g_r q'\cdot \Phi_1)(f_r)\big)(y_{(r),(r)}) \\ &\cdot \overline{\big(\omega_{(r),(r)}(g_r^{-1})(g_r\cdot\Phi_2)(f_r)\big)(y_{(r),(r)})}  \mathop{dy_{(r),(r)}}\mathop{dg_r}  \mathop{dq'} \\
 &=  \int_{Q^{\prime}_{r+1}} \overline{\chi_{\gamma', r+1}(q')} \\ & \int_{G_{(r-1)}U_r \bs G_{(r)}} \int_{Y_{(r),(r)}} (g_r q'\cdot \Phi_1)(f_r)(y_{(r),(r)}) \overline{(g_r\cdot\Phi_2)(f_r)(y_{(r),(r)})}  \mathop{dy_{(r),(r)}}\mathop{dg_r}  \mathop{dq'}
\end{align*}
where here we have used the unitarity of $\omega_{(r),(r)}$. 

\vskip 5pt

The integral over $Q^{\prime}_{r+1}$ is handled in exactly the same way (and is in fact simpler, in some sense), by considering the action of $Q^{\prime}_{r+1}$ and again employing a completely analogous argument to that in \cite[p.161-162]{GW2}. One obtains \begin{align*} (\ref{KeyInductive}) &= \int_{G_{(r-1)}U_r \bs G_{(r)}} \int_{Y_{(r-1),(r)}} (g_r \cdot \Phi_1)(f_r)(f_{-r})(y_{(r-1),(r)}) \\ & \cdot \overline{(g_r\cdot\Phi_2)(f_r)(f_{-r})(y_{(r-1),(r)})}  \mathop{dy_{(r-1),(r)}}\mathop{dg_r} 
\end{align*}
(Here, we simply need to perform a pointwise integration over $\Hom(V_{-r},V'_{(r)})$, rather than fiberwise integration using a submersion $q_r$, which explains why the situation here is simpler.)

\vskip 5pt

Now one again proceeds iteratively to compute the integral over $U^{\prime}_{r}, \dots, U^{\prime}_{1}$, as in \cite{W}, and the computation proceeds in completely the same way as what we have just done for $U^{\prime}_{r+1}$.

At the `base case' of $U^{\prime}_{1}$, there are certain differences (which in fact simplify the computation). 
First, $Q^{\prime}_1$ is trivial, hence we need only consider the integral over $Z^{\prime}_1$; now $\chi_{\gamma',1}$ is no longer trivial on $Z^{\prime}_1$, and so one obtains an integral over the $F_0$ with \[ F_0 F_0^\ast = e' \in \Hom(V'_{-1},V'_1) \] (instead of $F_0 F_0^\ast = 0$). Then $G_{(0)}$ acts transitively on the set of such $F_0$, and we have the representative $f_0$ with stabiliser $L$.

In conclusion, we therefore obtain \begin{align*} \int_{U'} \overline{\chi_{\gamma'}(u')} \langle u'\cdot \Phi_1, \Phi_2 \rangle_\omega du' &= \int_{LU \bs G_{(r)}} (g \cdot \Phi_1)(f) \cdot \overline{(g\cdot\Phi_2)(f)}  \mathop{dg},
\end{align*}
which is as desired. 

\end{proof}

\section{Plancherel density of basic function}\label{sec:BasicFunction}

We would now like to compute the Plancherel density of the `basic function' $\phi_0$ on $X$, that is, the characteristic function of $X(\o_F)$. 

By a similar argument as in \cite[Section 4.7]{W}, note that $\phi_0=p(\Phi_0)$, where $\Phi_0\in \omega^\infty$ is the characteristic function of $Y_{(r),(r+1)}(\o_F)$. 

Therefore, by Theorem \ref{thm:MainSpectral}, what is pertinent is the computation of \[ \langle l_\sigma(\theta_\sigma(\Phi_0)), l_\sigma(\theta_\sigma(\Phi_0)) \rangle_{\theta(\sigma)} \] for (tempered, $\psi$-generic) $G'(\o_F)$-unramified $\sigma\in\widehat{G'}$. 

For each such $\sigma$, fix a $G'(\o_F)$-invariant vector $v_0 \in \sigma$ with $\langle v_0,v_0\rangle_\sigma = 1$ and a $G(\o_F)$-invariant vector $w_0\in \theta(\sigma)$ with $\langle w_0,w_0\rangle_{\theta(\sigma)} = 1$. Suppose $\theta_\sigma(\Phi_0) = c_\sigma v_0\otimes w_0$. Then \[ \langle l_\sigma(\theta_\sigma(\Phi_0)), l_\sigma(\theta_\sigma(\Phi_0)) \rangle_{\theta(\sigma)} = |c_\sigma|^2 \cdot |l_\sigma(v_0)|^2.  \]

Both these quantities have been computed in \cite[Proposition 3]{LR} and \cite[Proposition 2.14]{LM} respectively. In particular, we have \[ |c_\sigma|^2 = Z(\Phi_0,\Phi_0,v_0,v_0) = \vol(G'(\o_F)) \frac{L(k-a,\sigma,\std)}{\zeta_F(k)\cdot \prod_{j=1}^a \zeta_F(2k-2j)} \] where $Z$ denotes the local doubling zeta integral (note here that we are in the second case of \cite[Remark 3]{LR}), and \[ |l_\sigma(v_0)|^2 = \frac{\Delta_{G'}(1)}{L(1,\sigma,\Ad)}, \] where $\Delta_{G'}$ is the $L$-factor of the dual to the motive associated to $G'$ (introduced by Gross in \cite{Gr}). 

Observe that  $\vol(G'(\o_F)) = \Delta_{G'}(1)^{-1}$ \cite[Prop. 4.7]{Gr}, so that \[ \langle l_\sigma(\theta_\sigma(\Phi_0)), l_\sigma(\theta_\sigma(\Phi_0)) \rangle_{\theta(\sigma)} = \frac{L(k-a,\sigma,\std)}{L(1,\sigma,\Ad)\cdot \zeta_F(k)\cdot \prod_{j=1}^a \zeta_F(2k-2j)}  . \]

Now, recall that the choice of normalisation of measure in \cite{BZSV} is that of the `Tamagawa measure' on $X$ divided by the `Tamagawa volume' of $G(\o_F)$, which in this case (considering the exponents of $G=\O_{2k}$) is 
\[ \Delta_G(1)^{-1} = (\zeta_F(k) \cdot \prod_{j=1}^{k-1} \zeta_F(2k-2j))^{-1}; \] 
see \cite[Section 2.4]{LM}. In conclusion, we see that one obtains:

\begin{prop}\label{prop:MainComputation}
We have
    \begin{equation}\label{MainComputation} \langle \phi_0,\phi_0 \rangle'_X = \int_{\widehat{G'}_{\text{unramified}}} \frac{L(k-a,\sigma,\std)\cdot \zeta_F(2k-2a-2)\cdots\zeta_F(4)\zeta_F(2)}{L(1,\sigma,\Ad)} \mathop{d\mu_{G'}(\sigma)}, \end{equation} Here, $'$ indicates the renormalised (as in \cite{BZSV}) measure on $X$. 
\end{prop}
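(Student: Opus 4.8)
The plan is to assemble Proposition~\ref{prop:MainComputation} purely by bookkeeping, feeding the two explicit evaluations quoted just above into the spectral decomposition of Theorem~\ref{thm:MainSpectral}. First I would observe that, by the discussion preceding the statement, $\phi_0 = p(\Phi_0)$ with $\Phi_0$ the characteristic function of $Y_{(r),(r+1)}(\o_F)$, so Theorem~\ref{thm:MainSpectral} applied to $\phi_1 = \phi_2 = \phi_0$ gives
\[ \langle \phi_0, \phi_0 \rangle_X = \int_{\widehat{G'}} \langle l_\sigma(\theta_\sigma(\Phi_0)), l_\sigma(\theta_\sigma(\Phi_0)) \rangle_{\theta(\sigma)} \, d\mu_{G'}(\sigma). \]
Since $\Phi_0$ is $G'(\o_F) \times G(\o_F)$-invariant and the map $\theta_\sigma$ is equivariant, the image $\theta_\sigma(\Phi_0)$ lies in the line $\sigma^{G'(\o_F)} \otimes \theta(\sigma)^{G(\o_F)}$, which is zero unless $\sigma$ is unramified; hence the integral is supported on $\widehat{G'}_{\mathrm{unramified}}$, and on that locus we may write $\theta_\sigma(\Phi_0) = c_\sigma \, v_0 \otimes w_0$ as in the excerpt, so that $\langle l_\sigma(\theta_\sigma(\Phi_0)), l_\sigma(\theta_\sigma(\Phi_0)) \rangle_{\theta(\sigma)} = |c_\sigma|^2 \, |l_\sigma(v_0)|^2$.

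Next I would quote the two ingredient computations. The quantity $|c_\sigma|^2$ is, by the defining property relating $\theta_\sigma$ to the doubling integral, equal to the local doubling zeta integral $Z(\Phi_0,\Phi_0,v_0,v_0)$, whose unramified value is computed in \cite[Proposition 3]{LR} (taking care that we are in the second case of \cite[Remark 3]{LR}, matching the parity/size of our spaces), giving
\[ |c_\sigma|^2 = \vol(G'(\o_F)) \, \frac{L(k-a, \sigma, \std)}{\zeta_F(k) \cdot \prod_{j=1}^a \zeta_F(2k-2j)}. \]
The Whittaker quantity $|l_\sigma(v_0)|^2 = \Delta_{G'}(1) / L(1, \sigma, \Ad)$ is the unramified Whittaker evaluation of \cite[Proposition 2.14]{LM}. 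I would then use the identity $\vol(G'(\o_F)) = \Delta_{G'}(1)^{-1}$ (Gross's mass formula for $G' = \Sp_{2a}$ with the chosen Tamagawa measure) to cancel the $\Delta_{G'}(1)$ factors, obtaining
\[ \langle l_\sigma(\theta_\sigma(\Phi_0)), l_\sigma(\theta_\sigma(\Phi_0)) \rangle_{\theta(\sigma)} = \frac{L(k-a, \sigma, \std)}{L(1, \sigma, \Ad) \cdot \zeta_F(k) \cdot \prod_{j=1}^a \zeta_F(2k-2j)}. \]

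Finally I would pass to the renormalised measure $\langle \cdot, \cdot \rangle'_X$ of \cite{BZSV}, which by the convention recalled in Section~\ref{sec:BasicFunction} means dividing the Tamagawa measure on $X$ by the Tamagawa volume $\Delta_G(1)^{-1} = \bigl( \zeta_F(k) \cdot \prod_{j=1}^{k-1} \zeta_F(2k-2j) \bigr)^{-1}$ of $G(\o_F)$; multiplying the integrand through by $\Delta_G(1) = \zeta_F(k) \prod_{j=1}^{k-1}\zeta_F(2k-2j)$ cancels the $\zeta_F(k)$ and the $\prod_{j=1}^a \zeta_F(2k-2j)$ in the denominator and leaves exactly the residual product $\prod_{j=a+1}^{k-1} \zeta_F(2k-2j) = \zeta_F(2k-2a-2) \cdots \zeta_F(4)\zeta_F(2)$ in the numerator, yielding the displayed formula \eqref{MainComputation}. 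The only genuinely delicate points — neither of which is a real obstacle here, since both are cited — are verifying that the hypotheses of \cite[Proposition 3]{LR} are met by our particular $(\O_{2k}, \Sp_{2a})$ doubling setup (the "second case of Remark 3" bookkeeping, i.e. matching $k - a$ as the correct point of evaluation and the correct normalising $\zeta$-factors) and confirming that the Haar normalisations implicit in $|c_\sigma|^2$, $|l_\sigma(v_0)|^2$, $d\mu_{G'}$ and $\vol(G'(\o_F))$ are mutually compatible so that the $\Delta_{G'}(1)$ cancellation is exact rather than up to a power of $q$; once those are in hand the proof is pure algebra.
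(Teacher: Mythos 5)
Your proposal is correct and follows essentially the same route as the paper: apply Theorem~\ref{thm:MainSpectral} to $\phi_0=p(\Phi_0)$, reduce to $|c_\sigma|^2\,|l_\sigma(v_0)|^2$ on the unramified locus, quote \cite[Proposition 3]{LR} and \cite[Proposition 2.14]{LM}, cancel via $\vol(G'(\o_F))=\Delta_{G'}(1)^{-1}$, and absorb the renormalisation by $\Delta_G(1)$. The bookkeeping of the residual $\zeta$-factors $\prod_{j=a+1}^{k-1}\zeta_F(2k-2j)$ matches the paper exactly.
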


It remains now to verify that this matches the expectations for the hyperspherical dual $M_1$. 

In the notation of \cite{BZSV}, for the hyperspherical dual $M_1$, we have $(S=)S_X=0$, and a routine computation (using the fact that one has $\ad = \wedge^2 \std$ as representations of $\mf{so}_{2k}$) shows that 
\begin{lem}\label{lem:SliceComputation}
We have
    \[ V_X=\mf{so}_{2a+1}^\perp \cap \mf{so}_{2k}^e = \std[2k-2a] \oplus \triv[4k-4a-4,\cdots,8,4] \] as $\O_{2a+1}$-representations.
\end{lem}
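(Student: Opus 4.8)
The plan is to compute the $\O_{2a+1}$-module structure (and the grading) of the reduced centraliser $V_X=\mf{so}_{2a+1}^\perp\cap\mf{so}_{2k}^e$ by reading the $\O_{2a+1}\times\SL_2$-module structure of $\mf{so}_{2k}=\wedge^2(\std_{2k})$ off the datum $\iota_1\colon\O_{2a+1}\times\SL_2\to\O_{2k}$ defining $M_1$, and then applying elementary $\mathfrak{sl}_2$-representation theory. By definition of $\iota_1$ (its nilpotent $e$ has Jordan type $[2k-2a-1,1^{2a+1}]$), the standard representation decomposes as $\std_{2k}\cong (R\boxtimes\triv)\oplus(\triv\boxtimes\std_{2a+1})$, where $R=\mathrm{Sym}^{2k-2a-2}$ is the $(2k-2a-1)$-dimensional irreducible $\SL_2$-module with trivial $\O_{2a+1}$-action and $\std_{2a+1}$ carries the trivial $\SL_2$-action. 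Using $\ad=\wedge^2\std$ and taking exterior squares, I obtain $\mf{so}_{2k}\cong \wedge^2 R\ \oplus\ (R\otimes\std_{2a+1})\ \oplus\ \wedge^2\std_{2a+1}$ as $\O_{2a+1}\times\SL_2$-modules, where $\wedge^2\std_{2a+1}\cong\mf{so}_{2a+1}$ is the adjoint of $\O_{2a+1}$ with trivial $\SL_2$-action (it is the $\iota_1$-image of the $\O_{2a+1}$-factor, hence centralises $e$), and $\wedge^2 R\cong\mf{so}_{2k-2a-1}$ is the principal-$\SL_2$ module with trivial $\O_{2a+1}$-action. The three summands have disjoint $\O_{2a+1}$-isotypic support ($\triv$, $\std_{2a+1}$, $\mf{so}_{2a+1}$, pairwise non-isomorphic and all self-dual), so they are mutually orthogonal for the invariant trace form, which is nondegenerate on each; hence $\mf{so}_{2a+1}^\perp=\wedge^2 R\ \oplus\ (R\otimes\std_{2a+1})$ and $V_X=(\wedge^2 R)^e\ \oplus\ (R\otimes\std_{2a+1})^e$.

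Next I would compute each $e$-fixed space. For an $\SL_2\times H$-module $\mathrm{Sym}^d\boxtimes N$, the kernel of $\ad e$ is the highest-weight line of $\mathrm{Sym}^d$ tensored with $N$, i.e.\ one copy of $N$ in $\ad(h)$-weight $d$. Thus $(R\otimes\std_{2a+1})^e$ is a single copy of $\std_{2a+1}$ in $\ad(h)$-weight $2k-2a-2$. For $(\wedge^2 R)^e$ I use the standard $\mathfrak{sl}_2$-plethysm $\wedge^2\mathrm{Sym}^d\cong\mathrm{Sym}^{2d-2}\oplus\mathrm{Sym}^{2d-6}\oplus\cdots$, which for our even $d=2k-2a-2$ terminates at $\mathrm{Sym}^2$ and has $k-a-1$ summands (equivalently, it records the exponents $1,3,\dots,2(k-a-1)-1$ of $\mf{so}_{2k-2a-1}$). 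Hence $(\wedge^2 R)^e$ consists of $k-a-1$ copies of $\triv$, in $\ad(h)$-weights $4k-4a-6,\,4k-4a-10,\,\dots,\,6,\,2$. Note these weights are all strictly positive, so $\wedge^2\std_{2a+1}\cong\mf{so}_{2a+1}$ is exactly the $\ad(h)$-weight-$0$ part of $\mf{so}_{2k}^e$, which re-confirms that subtracting the $\iota_1$-image of $\mf{so}_{2a+1}$ leaves precisely the listed pieces with no stray degree-$0$ summand. One should also check the boundary case $k=a+1$, where $R$ is trivial one-dimensional, $\wedge^2 R=0$, and the $\triv$-part is empty, consistently with $4k-4a-4=0$.

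Finally I would convert $\ad(h)$-weights into the internal $\Z$-grading on $V_X$ used in \cite{BZSV}: in the Slodowy-slice (contracting, Kazhdan) normalisation an element of $\mf{so}_{2k}^e$ of $\ad(h)$-weight $d$ sits in internal degree $d+2$, a uniform shift. This sends $2k-2a-2\mapsto 2k-2a$ and $4k-4a-6,\dots,2\mapsto 4k-4a-4,\dots,4$, giving $V_X\cong\std[2k-2a]\oplus\triv[4k-4a-4,\dots,8,4]$ as $\O_{2a+1}$-representations, with in particular no degree-$1$ part, consistently with $S=S_X=0$. The argument is entirely routine; the only points where I would slow down are the $\wedge^2\mathrm{Sym}^d$ plethysm (verified by a dimension count or Clebsch--Gordan) and keeping the grading normalisation aligned with that of \cite{BZSV} — these are bookkeeping matters rather than a genuine obstacle.
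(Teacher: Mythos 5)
Your computation is correct and is exactly the ``routine computation'' the paper has in mind: it decomposes $\std_{2k}$ under $\iota_1(\O_{2a+1}\times\SL_2)$, applies $\ad=\wedge^2\std$ together with the plethysm $\wedge^2\mathrm{Sym}^d\cong\mathrm{Sym}^{2d-2}\oplus\mathrm{Sym}^{2d-6}\oplus\cdots$, takes highest-weight vectors, and shifts the $\ad(h)$-weights by $2$, all in agreement with the grading convention stated after the lemma. No gaps.
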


(Recall that the grading on $\mf{so}_{2a+1}^\perp \cap \mf{so}_{2k}^e$ is given by adding 2 to the weights of the adjoint $\mfsl_2$-action on $\mf{so}_{2k}$.)

\begin{rem}
    Recall that, as in the introduction, the significance of the vector space $V_X$ is that one has an isomorphism $M_1 \cong V_X \times^{G'^\vee} G^\vee$.
\end{rem}

Now note that the point(s) of evaluation of the $L$-factor associated to $V_X$ are (expected to be) given by $\frac{1}{2}$ times the grading on $V_X$. Proposition \ref{prop:MainComputation} taken together with Lemma \ref{lem:SliceComputation} therefore already shows that the obtained Plancherel density is in remarkable agreement with the general conjecture of \cite{BZSV}. 

\vskip 5pt

In order to verify that one has a precise agreement with the explicit numerical conjecture in the form given in \cite[Proposition 9.2.1]{BZSV}, it remains to make two further observations. 

First, the fact that the Plancherel measure is given as the pushforward under multiplication by $q^{-\rho_{L(X)}}$ is a direct consequence of \cite[Theorem 8.1(b)]{Ga}. Of course, this is simply a reflection of the expected characterisation of the spectral decomposition as functorial lifting along the map $\O_{2a+1}\times \SL_2\rightarrow \O_{2k}$. Here, $\rho_{L(X)}$ is $\frac{1}{2}$ of the (co)weight for $\O_{2k}$ corresponding to the map $\mfsl_2\rightarrow \mf{so}_{2k}$. 

Second, the Plancherel formula of \cite[Proposition 9.2.1]{BZSV} is given in terms of the Haar measure $dt$ on the maximal compact subgroup $A_X^{\vee (1)}$ of the torus $A_X^\vee$ in $G_X^\vee = \O_{2a+1}$, via the Satake isomorphism. (Recall that we have $G'=G_X$.) 

However, cf. also the group case of the same conjecture (which has essentially been shown by Macdonald \cite{Ma} and later also by Casselman \cite{Ca}), we may express the Plancherel measure $d\mu_{G'}(\sigma)$ (on unramified representations) as \[ d\mu_{G'}(\sigma) = \frac{1}{|W_X|} \frac{L(1,\sigma,\Ad)}{L(0,\sigma,\g_X^\vee/\mf{a}_X^\vee)} dt. \]

\begin{rem}
The astute reader would no doubt notice that there is a potential issue with normalisations to be addressed; namely, the group case of the same conjecture entails a renormalisation which gives $G'(\o_F)$ a volume equal to the \textit{reciprocal} of the `Tamagawa volume' $\vol(G'(\o_F))$ that we have given it in this paper. In other words we should have \[ \frac{1}{\vol(G'(\o_F))} = \frac{1}{|W_X|}\int_{A_X^{\vee (1)}}\frac{L(1,\sigma,\Ad)}{L(0,\sigma,\g_X^\vee/\mf{a}_X^\vee)} dt.  \]

However the (Harish-Chandra-)Plancherel formula for $G'$ (with its Tamagawa measure) gives \[  \vol(G'(\o_F)) =  \int_{\widehat{G'}_{\text{unramified}}} \vol(G'(\o_F))^2 d\mu_{G'}(\sigma), \] and comparing the two shows that we have the correct normalisation for the Tamagawa measure on $G'$. 
\end{rem}

The above two observations, taken together with the main Proposition \ref{prop:MainComputation}, show

\begin{thm}\label{thm:MainThm}
The numerical conjecture for the Plancherel density \cite[Proposition 9.2.1]{BZSV} is true for the hyperspherical dual pair $M_1$ and $M_2$. 
\end{thm}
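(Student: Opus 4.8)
The plan is to assemble Theorem~\ref{thm:MainThm} entirely from pieces that are already in place, so that the proof is really a bookkeeping exercise in matching normalisations. The starting point is Proposition~\ref{prop:MainComputation}, which expresses $\langle \phi_0,\phi_0\rangle'_X$ as an explicit integral over $\widehat{G'}_{\text{unramified}}$ against the Plancherel measure $d\mu_{G'}(\sigma)$, with integrand
\[ \frac{L(k-a,\sigma,\std)\cdot \zeta_F(2k-2a-2)\cdots\zeta_F(4)\zeta_F(2)}{L(1,\sigma,\Ad)}. \]
First I would rewrite this integrand using Lemma~\ref{lem:SliceComputation}: since $V_X = \std[2k-2a]\oplus\triv[4k-4a-4,\dots,8,4]$ as $\O_{2a+1}$-representations, the numerator $L(k-a,\sigma,\std)$ is precisely $L((2k-2a)/2,\sigma,\std[2k-2a])$, i.e. the $\std$-part of $V_X$ evaluated at half its degree, and the product $\zeta_F(2k-2a-2)\cdots\zeta_F(2)$ is exactly $\prod_i L(i/2,\sigma,\triv[i])$ over the trivial summands in degrees $4,8,\dots,4k-4a-4$ (each $\triv[2j]$ contributing $\zeta_F(j)$). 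Thus the integrand is $\prod_{i\in\Z} L(i/2,V_X^{(i)})/L(1,\sigma,\Ad)$, which is the shape demanded by \cite[Proposition 9.2.1]{BZSV}. Informally this already identifies the density with the $L$-value $\prod_i L(i/2,V_X^{(i)})$ attached to $M_1$, divided by $L(1,\sigma,\Ad)$.

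Next I would convert the Plancherel measure $d\mu_{G'}(\sigma)$ into the form used in \cite{BZSV}, namely a measure on the maximal compact torus $A_X^{\vee(1)}\subseteq A_X^\vee\subseteq G_X^\vee=\O_{2a+1}$. Here I invoke the unramified Macdonald--Casselman formula (\cite{Ma}, \cite{Ca}), quoted in the excerpt as
\[ d\mu_{G'}(\sigma) = \frac{1}{|W_X|}\frac{L(1,\sigma,\Ad)}{L(0,\sigma,\g_X^\vee/\mf{a}_X^\vee)}\,dt. \]
Substituting this into the integral from Proposition~\ref{prop:MainComputation}, the $L(1,\sigma,\Ad)$ in the denominator of the Plancherel integrand cancels against the $L(1,\sigma,\Ad)$ in the numerator of the Macdonald formula, leaving exactly
\[ \frac{1}{|W_X|}\,\frac{\det(I-t|_{\g_X^\vee/\mf{a}_X^\vee})}{\det(I-(t,q^{-1/2})|_{V_X})}\,dt \]
as the measure on $A_X^{\vee(1)}$ — which is precisely the claimed formula, since $L(0,\sigma,\g_X^\vee/\mf{a}_X^\vee)^{-1}=\det(I-t|_{\g_X^\vee/\mf{a}_X^\vee})$ and $\prod_i L(i/2,V_X^{(i)})^{-1}=\det(I-(t,q^{-1/2})|_{V_X})$ with $\G_m$ acting by the grading. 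Finally, the statement that the measure on $A^\vee/W$ is the pushforward of this measure under $t\mapsto t\cdot q^{-\rho_{L(X)}}$ follows from \cite[Theorem 8.1(b)]{Ga}, reflecting that the spectral decomposition of Theorem~\ref{thm:MainSpectral2} is functorial lifting along $\O_{2a+1}\times\SL_2\to\O_{2k}$, with the shift by $q^{-\rho_{L(X)}}$ built into the theta correspondence on unramified parameters.

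The step I expect to be the main obstacle — and the only genuinely delicate point — is the reconciliation of normalisations, flagged in the two Remarks before the theorem. The subtlety is that \cite{BZSV} normalises the measure on $X$ by the \emph{reciprocal} of the Tamagawa volume of $G(\o_F)$, and the Macdonald formula as quoted implicitly normalises $G'(\o_F)$ to have volume $1/\vol(G'(\o_F))$ rather than the Tamagawa volume $\vol(G'(\o_F))=\Delta_{G'}(1)^{-1}$ used throughout this paper. I would resolve this exactly as indicated in the second Remark: apply the Harish-Chandra--Plancherel formula for $G'$ with its Tamagawa measure, which gives $\vol(G'(\o_F)) = \int_{\widehat{G'}_{\text{unramified}}}\vol(G'(\o_F))^2\,d\mu_{G'}(\sigma)$, and compare with $1/\vol(G'(\o_F)) = \frac{1}{|W_X|}\int_{A_X^{\vee(1)}} L(1,\sigma,\Ad)/L(0,\sigma,\g_X^\vee/\mf{a}_X^\vee)\,dt$; the consistency of these two identities certifies that the Tamagawa normalisation on $G'$ is the correct one and that the factors $\Delta_{G'}(1)$, $\Delta_G(1)$ appearing in Section~\ref{sec:BasicFunction} have been accounted for correctly. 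Once this normalisation check is in place, combining Proposition~\ref{prop:MainComputation} with Lemma~\ref{lem:SliceComputation}, the Macdonald--Casselman formula, and \cite[Theorem 8.1(b)]{Ga} yields \cite[Proposition 9.2.1]{BZSV} for the pair $(M_1,M_2)$, which is the assertion of the theorem.
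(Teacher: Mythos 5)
Your proposal is correct and follows essentially the same route as the paper: Theorem~\ref{thm:MainThm} is indeed obtained by assembling Proposition~\ref{prop:MainComputation} with Lemma~\ref{lem:SliceComputation}, converting $d\mu_{G'}$ via the Macdonald--Casselman formula (with the $L(1,\sigma,\Ad)$ cancellation), invoking \cite[Theorem 8.1(b)]{Ga} for the $q^{-\rho_{L(X)}}$ shift, and checking normalisations via the Harish-Chandra--Plancherel formula exactly as in the Remark. Your explicit matching of the zeta factors $\zeta_F(2),\dots,\zeta_F(2k-2a-2)$ with the trivial summands of $V_X$ in degrees $4,\dots,4k-4a-4$ is the same bookkeeping the paper leaves implicit.
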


To be precise, what we have shown is:

\begin{thm}\label{thm:MainThm2}
    The Plancherel density of the basic function $\phi_0$, which is a measure on $\widehat{G}_{\text{unramified}}$, and hence (via Satake) identified with a measure on $A^\vee/W$, where $A^\vee$ is the maximal torus in $\O_{2k}$ and $W$ its Weyl group, is given as the pushforward via \begin{align*}
        A_X^{\vee (1)} &\rightarrow A^\vee \\
        (-) &\mapsto (-)\cdot q^{-\rho_{L(X)}}
    \end{align*}
    of the measure on $A_X^{\vee (1)}$ given by \[ \frac{1}{|W_X|} \frac{ \det (I - t|_{\g_X^\vee/\mf{a}_X^\vee}) }{\det(I-(t,q^{-1/2})|_{V_X}) } dt. \]
\end{thm}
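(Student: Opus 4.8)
The plan is to assemble Theorem \ref{thm:MainThm2} purely by combining the ingredients already in place. First I would start from Proposition \ref{prop:MainComputation}, which gives
\[ \langle \phi_0,\phi_0 \rangle'_X = \int_{\widehat{G'}_{\text{unramified}}} \frac{L(k-a,\sigma,\std)\cdot \zeta_F(2k-2a-2)\cdots\zeta_F(4)\zeta_F(2)}{L(1,\sigma,\Ad)} \mathop{d\mu_{G'}(\sigma)}, \]
and substitute the Macdonald--Casselman expression $d\mu_{G'}(\sigma) = \frac{1}{|W_X|} \frac{L(1,\sigma,\Ad)}{L(0,\sigma,\g_X^\vee/\mf{a}_X^\vee)} dt$ recalled just above, so that the $L(1,\sigma,\Ad)$ factors cancel and one is left with an integral over $A_X^{\vee(1)}$ against $\frac{1}{|W_X|}\frac{L(k-a,\sigma,\std)\cdot\zeta_F(2k-2a-2)\cdots\zeta_F(2)}{L(0,\sigma,\g_X^\vee/\mf{a}_X^\vee)} dt$. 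The denominator $L(0,\sigma,\g_X^\vee/\mf{a}_X^\vee)$ is by definition $\det(I-t|_{\g_X^\vee/\mf{a}_X^\vee})^{-1}$, which supplies exactly the numerator $\det(I-t|_{\g_X^\vee/\mf{a}_X^\vee})$ of the claimed formula.

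Next I would identify the remaining factor $L(k-a,\sigma,\std)\cdot\zeta_F(2k-2a-2)\cdots\zeta_F(4)\zeta_F(2)$ with $\det(I-(t,q^{-1/2})|_{V_X})^{-1}$. By Lemma \ref{lem:SliceComputation} we have $V_X = \std[2k-2a]\oplus\triv[4k-4a-4,\dots,8,4]$ as a graded $\O_{2a+1}$-representation. Unwinding the convention that the point of evaluation of the $L$-factor of a graded summand $W[i]$ is $i/2$ — equivalently, that $\G_m$ acts on $W[i]$ with weight $i$ so that $(t,q^{-1/2})$ acts on $W[i]$ as $t\cdot q^{-i/2}$ — the degree-$1$ summand $\std[2k-2a]$ contributes $\det(I-q^{-(k-a)}t|_{\std})^{-1} = L(k-a,\sigma,\std)$, while each one-dimensional trivial summand $\triv[2j]$ for $j=2,\dots,k-a-1$ contributes $(1-q^{-j})^{-1} = \zeta_F(j)$ — wait, one must be careful: $\triv[4]$ sits in degree $4$, evaluated at $2$, giving $\zeta_F(2)$, and $\triv[4k-4a-4]$ in degree $4k-4a-4$ evaluated at $2k-2a-2$ giving $\zeta_F(2k-2a-2)$, so the trivial summands range over even integers $2,4,\dots,2k-2a-2$, exactly matching $\zeta_F(2k-2a-2)\cdots\zeta_F(4)\zeta_F(2)$. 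Thus $\det(I-(t,q^{-1/2})|_{V_X})^{-1}$ is precisely the numerator we need, and the integrand becomes $\frac{1}{|W_X|}\frac{\det(I-t|_{\g_X^\vee/\mf{a}_X^\vee})}{\det(I-(t,q^{-1/2})|_{V_X})} dt$.

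Finally I would address the pushforward along $A_X^{\vee(1)}\to A^\vee$, $(-)\mapsto(-)\cdot q^{-\rho_{L(X)}}$. The Plancherel density of $\phi_0$ is a priori a measure on $\widehat{G}_{\text{unramified}} \cong A^\vee/W$, whereas the computation above produces a measure on $\widehat{G'}_{\text{unramified}}\cong A_X^{\vee(1)}/W_X$; the translation between the two is the theta-lifting map on Satake parameters, which by Theorem \ref{thm:MainSpectral2} governs the spectral support, and by \cite[Theorem 8.1(b)]{Ga} is realized on unramified parameters precisely as multiplication by $q^{-\rho_{L(X)}}$, with $\rho_{L(X)} = \frac12$ of the cocharacter of $\O_{2k}$ attached to $\mfsl_2\to\mf{so}_{2k}$. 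Pushing the measure on $A_X^{\vee(1)}$ forward along this map gives the asserted measure on $A^\vee$, completing the proof.

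I expect the main obstacle to be purely bookkeeping: matching the grading conventions on $V_X$ (the ``add 2 to the $\mfsl_2$-weights'' normalisation) against the ``evaluate at $i/2$'' convention for the $L$-factors, and checking that the Gross-motive factors $\Delta_{G'}(1)$, $\Delta_G(1)$ and the various $\zeta_F$ shifts introduced in the renormalisation passage of Section \ref{sec:BasicFunction} all cancel correctly — in particular confirming that the residual $\zeta_F$ product in Proposition \ref{prop:MainComputation} consists of exactly the even values $2,4,\dots,2k-2a-2$ with no off-by-one error, and that the normalisation subtlety flagged in the Remark (the reciprocal Tamagawa volume of $G'(\o_F)$) is consistent with the Harish-Chandra--Plancherel formula. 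None of this is conceptually hard, but it is where an error would most plausibly hide, so I would carry out the $\zeta$-factor match explicitly.
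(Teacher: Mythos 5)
Your proposal is correct and follows essentially the same route as the paper: Theorem \ref{thm:MainThm2} is obtained there exactly by combining Proposition \ref{prop:MainComputation} with the Macdonald--Casselman expression for $d\mu_{G'}$ (cancelling the $L(1,\sigma,\Ad)$ factors), identifying $L(k-a,\sigma,\std)\cdot\zeta_F(2)\cdots\zeta_F(2k-2a-2)$ with $\det(I-(t,q^{-1/2})|_{V_X})^{-1}$ via Lemma \ref{lem:SliceComputation} and the ``evaluate at $i/2$'' convention, and invoking \cite[Theorem 8.1(b)]{Ga} for the pushforward by $q^{-\rho_{L(X)}}$. Your explicit $\zeta$-factor match (evaluation points $2,4,\dots,2k-2a-2$ from the trivial summands in degrees $4,8,\dots,4k-4a-4$, and $k-a$ from $\std[2k-2a]$) and your attention to the Tamagawa-volume normalisation are exactly the bookkeeping the paper carries out.
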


\vskip 10pt

\section{Global period}\label{sec:GlobalPeriod}

The global period conjecture, at least as stated in the setting of \cite[Section 14.6.4]{BZSV}), can now be verified in exactly the same way as in \cite[Section 12.14]{GW2}, conditional on the corresponding conjecture for the Whittaker period on general symplectic groups $G'=\Sp_{2a}$ \cite[Conjecture 3.3]{LM}.

   Let us be more precise.  Fix a number field $k$ with ring of adeles $\A$ and a non-trivial unitary additive character $\psi = \otimes_v \psi_v : k\bs \A \rightarrow \C^\times$, and for any algebraic group $G$ over $k$, denote as usual $[G]:= G(k)\bs G(\A)$. For $f$ a cuspidal automorphic form on $G$, denote the period integral 
   \[ P_{\gamma}(f) = \int_{[L]} \int_{[U]} f(ul) \cdot \overline{ \chi_{\gamma}(u)  } \mathop{du} \mathop{dl}, \] 
   taking as usual Tamagawa measure on all adelic groups. Here $\gamma = [2a-1, 1^{2k-2a+1} ]$ is the partition we have been considering in the local setting, which is the image of  the partition $\gamma' = [2a]$ under the moment map correspondence for unipotent orbits of $(G, G')$.

We first note the following result from  \cite[Thm. 4.6 and Thm. 4.7]{W}.

\begin{thm}
Let $\Pi$ be an irreducible  cuspidal automorphic representation of $G = \O_{2k}$ and consider its  global theta lift $\Theta_{\psi}(\Pi)$  to $G' = \Sp_{2a}$. 
Then $\Theta_{\psi}(\Pi)$ is $\psi$-generic if and only if the automorphic period $P_{\gamma}$ is nonzero on $\Pi$. 
\end{thm}

Since we are interested in studying the automorphic period $P_{\gamma}$, let us assume the conclusion of the above theorem holds. 
 We shall further assume that $\Theta_{\psi}(\Pi) =: \Sigma$ is cuspidal. In this case,  being a globally $\psi$-generic cuspidal representation of $G'$, $\Sigma$ has a generic  L-parameter $\phi$  and  satisfies $\Theta_{\psi}(\Sigma) = \Pi$ (by \cite[Pg. 231, Th\'eorem\`e]{Mo}). Moreover, by results on the  theta correspondence of unramified representations (see \cite[Thm. 8.1 and Sect. 8.2]{Ga} for example),  the cuspidal representation $\Pi =\Theta_{\psi}(\Sigma)$ has A-parameter of the form $\phi \oplus S_{2k-2a-1}$, where $S_{2k-2a-1}$ denotes the $(2k-2a-1)$-dimensional irreducible representation of the Arthur $\SL_2$. Formulated in terms of the hypothetical Langlands group $\mathcal{L}_k$, the A-parameter of $\Pi$ thus  factors as: 
\begin{equation} \label{E:Arthur}
\begin{CD}
 \mathcal{L}_k \times \SL_2 @>\phi \times S_{2k-2a-1}>>\SO_{2a+1}(\C)    \times \SO_{2k-2a-1}(\C) @>>> \O_{2k}(\C).
 \end{CD} \end{equation}
In particular, we are putting ourselves in the setting of \cite[Section 14.6.4]{BZSV}.

Here is our first global result:

\begin{thm}\label{thm:GlobalPeriod}
Assume \cite[Conjecture 3.3]{LM} for $G'$, and that one has local multiplicity one for $LU, \overline{\chi_\gamma}\bs G$ (this is the less serious hypothesis). 

Let $\Pi$ be an irreducible  cuspidal automorphic representation of $G = \O_{2k}$ whose global theta lift $\Sigma := \Theta_{\psi}(\Pi)$ on $G' = \Sp_{2a}$ is cuspidal and globally $\psi$-generic, with corresponding generic L-parameter $\phi$ and associated global component group $S_{\phi}$. 
Let $T$ be a finite set of places (including the Archimedean ones) outside which all data are unramified. Then the restriction of the functional $P_\gamma$ to $\Pi$ factorises as 
\[ |P_\gamma(f)|^2 = \frac{1}{|S_\phi|} \frac{\Delta_G^T(1)}{\Delta_L^T(1)^2} \frac{\prod_i L^T(i/2, \Sigma, V_X^{(i)})}{L^T(1, \Sigma, \Ad) } \prod_{v\in T} |l'_{\Pi_v} (f_v)|^2 \]
for $f=\otimes_v f_v \in \otimes_v \Pi_v \cong \otimes_v \theta_{\psi_v}(\Sigma_v)$ and where $l'_{\Pi_v}$ are the (normalised as in \cite{GW2}, so that $l'_{\Pi_v} (w_{0,v})=1$ for almost all $v$) canonical functionals obtained from the Plancherel decomposition in Theorem \ref{thm:MainSpectral} and Theorem \ref{thm:MainSpectral2} .  
\end{thm}

\begin{proof}
    As noted above, the proof is completely analogous to that in \cite{GW2}, since the local spectral decomposition has been shown in this paper, and the necessary global computation \cite[Proposition 12.2]{GW2} has been performed in \cite{W}. We will give a sketch (of the key argument) here, omitting the other formal details which can be found in \cite{GW2}. 

Define, in the global setting, the global theta lift
    \begin{align*}
        B^{\text{Aut}}_\Pi : \omega \otimes \overline{\Pi} &\rightarrow \Sigma \\
        \Phi \otimes \overline{f} &\mapsto \Big( g' \mapsto \int_{[G]} \theta(\Phi)(g'g)\overline{f(g)} dg \Big).
    \end{align*}
The image of $B^{\text{Aut}}_\Pi$ is $\Sigma$, by multiplicity one for $G'=\Sp_{2a}$ (at least for tempered A-packets, since $\Sigma$ is globally generic) \cite{Ar}.

 Now the main result of \cite{W} (cf. Section 4.6 there) can be stated as:
     \begin{equation}\label{eq:Global}  P_{\gamma'} (B^{\text{Aut}}_\Pi(\Phi,f)) = \langle p(\Phi), \beta^{\text{Aut}}_\Pi(f) \rangle_{X(\A)}, \end{equation}
where $p$ is defined as in Section \ref{sec:Spectral}, and here
 \begin{align*}
        \beta^{\text{Aut}}_\Pi : \Pi &\rightarrow C^\infty(X(\A)) \\
        f &\mapsto \Big( g \mapsto P_\gamma(g\cdot f)  \Big).
    \end{align*}
On the other hand, the spectral decomposition shown in this paper provides a local analog of this result.  Indeed, we have, in the local setting, for any $w\in \theta(\sigma)$, \begin{align}\label{eqn:LocalPeriodRelation}
        \langle \alpha_{\theta(\sigma)}(p(\Phi)), w\rangle_{\theta(\sigma)} &=  \langle l_\sigma(\theta_\sigma(\Phi)), w\rangle_{\theta(\sigma)} \\
        &= l_\sigma (B_{\theta(\sigma)}(\Phi, w)) \nonumber
    \end{align}
    where here \begin{align*}
        B_{\theta(\sigma)} : \omega \otimes \overline{\theta(\sigma)} &\rightarrow \sigma \\
        \Phi\otimes w &\mapsto \langle \theta_\sigma(\Phi), w\rangle_{\theta(\sigma)}
    \end{align*}
    is the local version of the global theta lifting $B^{\text{Aut}}_\Pi$.

    We have the dual or adjoint \[ \beta_{\theta(\sigma)} : \theta(\sigma) \rightarrow C^\infty(X) \] to the family of maps $\alpha_{\theta(\sigma)}$ which facilitates the spectral decomposition of $X$, and 
    \begin{equation}\label{eq:Local}  l_\sigma (B_{\theta(\sigma)}(\Phi, w)) = \langle p(\Phi), \beta_{\theta(\sigma)}(w) \rangle_{X}. \end{equation}
Note that $\beta_{\theta(\sigma)}$ is the local version of $\beta^{\text{Aut}}_\Pi$, and that $l_{\theta(\sigma)} = \text{ev}_{\id_G} \circ \beta_{\theta(\sigma)}$.

    To summarise, we have two main results (\ref{eq:Global}) and (\ref{eq:Local}), with (\ref{eq:Local}) the local analog of the global (\ref{eq:Global}). 

    By multiplicity one, we have factorisations \[  P_{\gamma'} = c(\Sigma) \cdot \prod_v^\ast l_{\Sigma_v}, \] \[ B_\Pi^{\text{Aut}} = b(\Sigma) \cdot \prod_v^\ast B_{\Pi_v}, \] \[  P_{\gamma} = c(\Theta(\Sigma)) \cdot \prod_v^\ast l_{\Pi_v}, \] and hence also \[  \beta_{\Pi}^{\text{Aut}} = c(\Theta(\Sigma)) \cdot \prod_v^\ast \beta_{\Pi_v}, \] for constants $c(\Sigma), b(\Sigma), c(\Theta(\Sigma))$, and where $^\ast$ denotes suitable (standard) regularisation as in \cite[Section 12.12]{GW2}. 

  Comparing (\ref{eq:Global}) and (\ref{eq:Local}), it is clear that we should have 
  \[ c(\Sigma)\cdot b(\Sigma) = \overline{c(\Theta(\Sigma))}. \]
  Now \cite[Conjecture 3.3]{LM} gives $|c(\Sigma)|^2 = \frac{1}{|S_\phi|}$, and as in \cite[Proposition 12.4]{GW2} (by the Rallis inner product formula), we have $|b(\Sigma)|=1$.  Therefore $|c(\Theta(\Sigma))|^2 = \frac{1}{|S_\phi|}$, as desired. 

    Finally, we note that, in the local unramified computation, \begin{equation}\label{LocalUnramifiedComputation} |l_{\theta(\sigma)}(w_0)|^2 =   \vol(X(\o_F))^{-2} \cdot \langle \alpha_{\theta(\sigma)}(\phi_0) , \alpha_{\theta(\sigma)}(\phi_0) \rangle_{\theta(\sigma)}, \end{equation} with the latter quantity already computed in Section \ref{sec:BasicFunction} (but here, we do not do the renormalisation of \cite{BZSV}). This explains the extra factor of $\frac{\Delta_G^T(1)}{\Delta_L^T(1)^2}$ in the final expression.

\end{proof}

\section{Degenerate Whittaker periods}\label{degenerate_whittaker}

In this section, we show how the key conjectures of \cite{MWZ} concerning degenerate Whittaker periods, which arise from a relative trace formula approach to the global period conjectures of \cite{BZSV}, can be addressed efficiently using the framework of theta correspondence set up in this paper. In particular, we will establish \cite[Conjectures 1.4 and 1.8]{MWZ} in our setting.  

\vskip 5pt

We continue to work in the global setting of the previous section and in particular, the setting of Theorem \ref{thm:GlobalPeriod}. There, we considered cuspidal automorphic representations $\Pi$ of $G = \O_{2k}$ whose A-parameters factor as in (\ref{E:Arthur}). The restriction of such A-parameters to the Arthur $\SL_2$ gives rise to a nilpotent orbit of $G^{\vee} = \O_{2k}(\C)$ with associated partition
 \[ \gamma^\vee=[2k-2a-1, 1^{2a+1}]. \]
 The Barbasch-Vogan dual nilpotent orbit \cite{BV} to $\gamma^{\vee}$ corresponds to the partition
  \[ \gamma^\vee_{BV}=[2a+1, 1^{2k-2a-1}]. \] 
 which in turn determines a class of degenerate Whittaker periods on $G = \O_{2k}$.  These are the degenerate Whittaker period addressed by \cite[Conjectures 1.4 and 1.8]{MWZ}.
\vskip 5pt

\subsection{Preliminaries and set-up} 
 Let us briefly recall the notation and set-up of Section \ref{notation}.   In what follows, the subscript ${}_{BV}$ denotes an object associated to $\gamma^\vee_{BV}$ (in contrast to $\gamma$); note that they are nilpotent orbits of the same group $G=\O_{2k}$. 

Similar to the set-up of Section \ref{notation} but now for $\gamma^\vee_{BV}$ instead of $\gamma$, we have a new decomposition  
 \[ V = V_{-2a,BV}\oplus\cdots \oplus V_{-2,BV}\oplus V_{0,BV}\oplus V_{2,BV}\oplus\cdots\oplus V_{2a,BV}, \] with $\dim V_{0,BV} = 2k-2a$, $V_{-2a,BV}\oplus\cdots \oplus V_{-2,BV}$ and $V_{2,BV}\oplus\cdots\oplus V_{2a,BV}$ isotropic, $V_{i,BV}\oplus V_{-i,BV}$ a hyperbolic plane (in the obvious way) for all $i\ne 0$, and the (split) form on $V$ restricting to a non-degenerate (split) form on $V_{0,BV}$. 

Again similarly, set $r=2a$, and as in \cite[Section 2.2.2]{W}, define a flag $(\overline{V}_{t,BV})$ of subspaces of $V$ by \[ \overline{V}_{t,BV} := \bigoplus_{j=-r}^t V_{j,BV}, \] then we have a parabolic $P_{BV}=M_{BV}U_{BV}$ which is the stabiliser of this flag. One has, as in \cite[Section 2.2]{W}, a unitary character $\chi_{\gamma^\vee_{BV}}$ of $U_{BV}$.

Now we observe the key fact: that $\gamma^\vee_{BV}$ and the regular nilpotent orbit $\gamma'$ of Section \ref{notation} are \textit{in fact in moment map correspondence with each other (just in the opposite direction from that of $\gamma$ and $\gamma'$)}! 

Indeed, as in \cite[Proposition 2.3]{W}, we have an element $f_{BV}\in\Hom(V',V)$ facilitating the moment map transfer of nilpotent orbits, with \[ f_{BV} = \bigoplus_{j=-(2a-1)}^{2a-1} f_{j,BV} \] and $f_{j,BV}\in\Hom(V'_j,V_{j+1})$ for each $j$. 

\begin{rem}
   One may suspect that the aforementioned key fact relating Barbasch-Vogan duals and moment map correspondence of nilpotent orbits is not a coincidence, and it would be interesting to study this further, especially in the context of the \cite{BZSV} framework.
\end{rem}

Now recall that as in Theorem \ref{thm:SmoothMain}, the main result of \cite{GZ} is the following

\begin{thm}\label{thm:SmoothMainDegenerate}
The map   \begin{align*}
    q : \omega^\infty &\rightarrow \S(U', \overline{\chi_\gamma'}\bs G')\\
    \Phi &\mapsto \phi:= \big(g' \mapsto (g'\cdot\Phi)(f_{BV})\big)
\end{align*} 
is surjective, and induces the isomorphism $(\omega^\infty)_{U_{BV},\chi_{\gamma^\vee_{BV}}} \cong \S(U', \overline{\chi_\gamma'}\bs G') $.
\end{thm}

As in Section \ref{sec:PlancherelRecall}, we have a commutative square 

\[\begin{tikzcd}
& \omega^\infty \arrow[dl,"{q}"]  \arrow[rd, "{\theta_\sigma}"] &\\
(\omega^\infty)_{U_{BV},\chi_{\gamma^\vee_{BV}}} \cong \S(U', \overline{\chi_\gamma'}\bs G')\arrow[dr,"\alpha_{\sigma}"] & & \sigma\otimes\theta(\sigma) = \sigma\otimes\pi\arrow[ld,"{l_{\pi,BV}}"]\\
& \sigma & 
\end{tikzcd}\]

\noindent Here, $\sigma$ is any $\psi$-generic irreducible representation of $G'$, $\pi=\theta(\sigma)$ its theta lift to $G$, $\alpha_\sigma$ is (part of the family of) maps facilitating the spectral decomposition in the Whittaker-Plancherel theorem for $G'$, and most importantly, $l_{\pi,BV} \in \Hom_{U_{BV}}(\pi,\chi_{\gamma^\vee_{BV}})$ is \textit{defined so that the square is commutative}. These (canonical) local functionals $l_{\pi,BV}$ play the role of the `local relative characters', whose definition is one of the key issues highlighted in \cite[Remark 1.5]{MWZ}.

\vskip 5pt

With this set-up in place, the plan for the rest of the section is clear: we will compute the spherical local relative character as in Section \ref{sec:BasicFunction}, and compute the global constants as in Section \ref{sec:GlobalPeriod}. This will allow us to deduce \cite[Conjecture 1.4]{MWZ}.

\subsection{Spherical local relative character}

As in Section \ref{sec:BasicFunction}, let $\Phi_0\in \omega^\infty$ be the characteristic function of $Y_{(r),(r+1)}(\o_F)$. For each $\sigma$, fix a $G'(\o_F)$-invariant vector $v_0 \in \sigma$ with $\langle v_0,v_0\rangle_\sigma = 1$ and a $G(\o_F)$-invariant vector $w_0\in \theta(\sigma)$ with $\langle w_0,w_0\rangle_{\theta(\sigma)} = 1$. Suppose $\theta_\sigma(\Phi_0) = c_\sigma v_0\otimes w_0$. 

Now, as was recorded in (\ref{eqn:LocalPeriodRelation}), we have the local period relation 
\begin{align}\label{eqn:LocalPeriodRelationNew}
        \langle \alpha_{\sigma}(q(\Phi_0)), v_0\rangle_{\sigma} &=  \langle l_{\pi,BV}(\theta_\sigma(\Phi_0)), v_0\rangle_{\sigma}
    \end{align}
On one hand, we have on the left-hand-side of (\ref{eqn:LocalPeriodRelationNew}), 
\begin{align}\label{eqn:LocalPeriodRelationNewLeft}
        \langle \alpha_{\sigma}(q(\Phi_0)), v_0\rangle_{\sigma} &=  \langle q(\Phi_0), \beta_\sigma(v_0)\rangle_{U', \overline{\chi_\gamma'}\bs G'} \\
        &= l_\sigma(v_0) \cdot \vol(G'(\o_F)), \nonumber
    \end{align}
since, as in the remark at the start of Section \ref{sec:BasicFunction}, we have that $q(\Phi_0)$ is the basic function of $(U', \overline{\chi_\gamma'})\bs G'$, and, as in the remark after (\ref{eq:Local}), we have $l_{\sigma} = \text{ev}_{\id_{G'}} \circ \beta_{\sigma}$.

On the other hand, on the right-hand-side of (\ref{eqn:LocalPeriodRelationNew}), we have 
\begin{align}\label{eqn:LocalPeriodRelationNewRight}
         \langle l_{\pi,BV}(\theta_\sigma(\Phi_0)), v_0\rangle_{\sigma} &=  c_\sigma \cdot l_{\pi,BV}(w_0).
    \end{align}

\noindent Combining (\ref{eqn:LocalPeriodRelationNewLeft}) and (\ref{eqn:LocalPeriodRelationNewRight}), we get 
\begin{align}\label{eqn:LocalPeriodRelationNewLeftRight}
         l_{\pi,BV}(w_0) &=  \frac{l_\sigma(v_0) \cdot \vol(G'(\o_F))}{c_\sigma} .
    \end{align}

\noindent It follows that (all the quantities on the right-hand-side have already been computed in Section \ref{sec:BasicFunction})
\begin{align*}
         |l_{\pi,BV}(w_0)|^2 &=  \frac{|l_\sigma(v_0)|^2 \cdot \vol(G'(\o_F))^2}{|c_\sigma|^2} \\
         &= \frac{\zeta_F(k)\cdot \prod_{j=1}^a \zeta_F(2k-2j)}{L(1,\sigma,\Ad) \cdot L(k-a,\sigma,\std)}.
    \end{align*}

Multiplying with the earlier computed value of $|l_\pi(w_0)|^2 = |l_{\theta(\sigma)}(w_0)|^2$ (cf. (\ref{LocalUnramifiedComputation}), at the end of the proof of Theorem \ref{thm:GlobalPeriod}), we obtain the key result (observe here the cancellation of many factors!) 
\begin{align*}
         |l_{\pi,BV}(w_0)||l_\pi(w_0)| &=  \frac{1}{L(1,\sigma,\Ad)\cdot \vol(X(\o_F))}.
    \end{align*}

\vskip 5pt

\subsection{Global constants}

Retain all the notation and set-up of Section \ref{sec:GlobalPeriod} and Theorem \ref{thm:GlobalPeriod}. For $f$ a cuspidal automorphic form on $G$, denote the period integral \[ P_{\gamma^\vee_{BV}}(f) = \int_{[U_{BV}]} f(u) \cdot \overline{ \chi_{\gamma^\vee_{BV}}(u)  } \mathop{du}, \] taking as usual Tamagawa measure on all adelic groups. This is the ``degenerate Whittaker period" or ``degenerate Whittaker coefficient" of \cite{MWZ}.

Denote the global theta lift
    \begin{align*}
        A^{\text{Aut}}_\Sigma : \omega \otimes \overline{\Sigma} &\rightarrow \Pi \\
        \Phi \otimes \overline{f} &\mapsto \Big( g \mapsto \int_{[G']} \theta(\Phi)(gg')\overline{f(g')} dg' \Big).
    \end{align*}
 Then as in (\ref{eq:Global}), we have the global period relation 
 \begin{equation}\label{eq:GlobalNew}  
 P_{\gamma^\vee_{BV}} (A^{\text{Aut}}_\Sigma(\Phi,f)) = \langle q(\Phi), \beta^{\text{Aut}}_\Sigma(f) \rangle_{(U', \overline{\chi_\gamma'})\bs G'(\A)}, 
 \end{equation}
where here \begin{align*}
        \beta^{\text{Aut}}_\Sigma : \Sigma &\rightarrow C^\infty((U', \overline{\chi_\gamma'})\bs G'(\A)) \\
        f &\mapsto \Big( g' \mapsto P_{\gamma'}(g'\cdot f)  \Big).
    \end{align*}

Similarly, we have factorisations 

 \[ A_\Sigma^{\text{Aut}} = a(\Sigma) \cdot \prod_v^\ast A_{\Sigma_v}, \] \[  P_{\gamma^\vee_{BV}} = c_{BV}(\Pi) \cdot \prod_v^\ast l_{\Pi_v}, \]  for constants $c_{BV}(\Pi), a(\Sigma)$. As in \cite[Proposition 12.4]{GW2} (by the Rallis inner product formula), we have $|a(\Sigma)|=1$.   

Now on the left-hand-side of the global (\ref{eq:GlobalNew}), we have 
\begin{equation}\label{eq:GlobalNewLeft}  P_{\gamma^\vee_{BV}} (A^{\text{Aut}}_\Sigma(\Phi,f)) = c_{BV}(\Pi)\cdot a(\Sigma)\cdot \prod_v^\ast  l_{\Pi_v} (\langle \theta_v(\Phi_v), f_v\rangle), \end{equation}
while on the right-hand-side of (\ref{eq:GlobalNew}), we have
\begin{equation}\label{eq:GlobalNewRight}  \langle q(\Phi), \beta^{\text{Aut}}_\Sigma(f) \rangle_{(U', \overline{\chi_\gamma'})\bs G'(\A)} = c(\Sigma)\cdot \prod_v^\ast  \langle q_v(\Phi_v), \beta_{\Sigma_v}(f_v)\rangle_{U', \overline{\chi_\gamma'}\bs G'}. \end{equation}
Comparing with the local (\ref{eqn:LocalPeriodRelationNew}), it is clear that we should have
 \[ c_{BV}(\Pi)\cdot a(\Sigma) = c(\Sigma).\]
It follows that \[ |c_{BV}(\Pi)|^2 = |c(\Sigma)|^2 = \frac{1}{|S_\phi|}.\]

\subsection{} Putting the preceding computations together, we obtain 

\begin{thm}\label{thm:GlobalPeriodNewOnly}
Keeping the same set-up and notation as in Theorem \ref{thm:GlobalPeriod}, we have 
\[ |P_{\gamma^\vee_{BV}}(f)|^2 = \frac{1}{|S_\phi|} \Delta_G^T(1) \frac{1}{\prod_i L^T(i/2, \Sigma, V_X^{(i)})\cdot L^T(1, \Sigma, \Ad) } \prod_{v\in T} |l'_{\Pi_v,BV} (f_v)|^2 \]
for $f=\otimes_v f_v \in \otimes_v \Pi_v$ and where $l'_{\Pi_v,BV}$ are the (normalised as in \cite{GW2}, so that $l'_{\Pi_v,BV} (w_{0,v})=1$ for almost all $v$) canonical local functionals.
\end{thm}

\noindent This is precisely \cite[Conjecture 1.4]{MWZ} for the degenerate Whittaker period associated to $\gamma^\vee_{BV}$. We also obtain: 

\begin{thm}\label{thm:GlobalPeriodNew}
Keeping the same set-up and notation as in Theorem \ref{thm:GlobalPeriod}, we have 
 \[ |P_{\gamma^\vee_{BV}}(f)||P_\gamma(f)| = \frac{1}{|S_\phi|} \frac{\Delta_G^T(1)}{\Delta_L^T(1)} \frac{1}{L^T(1, \Sigma, \Ad) } \prod_{v\in T} |l'_{\Pi_v,BV} (f_v)||l'_{\Pi_v} (f_v)| \]
for $f=\otimes_v f_v \in \otimes_v \Pi_v$ and where $l'_{\Pi_v,BV},l'_{\Pi_v}$ are the (normalised as in \cite{GW2}, so that $l'_{\Pi_v,BV} (w_{0,v})=l'_{\Pi_v} (w_{0,v})=1$ for almost all $v$) canonical local functionals.
\end{thm}

\noindent This is precisely \cite[Conjecture 1.8]{MWZ}, obtained by multiplying \cite[Conjecture 1.4]{MWZ} for the degenerate Whittaker period, with the BZSV conjecture \cite[Conjecture 1.1]{MWZ}.

\vskip 5pt

Finally, we note that \cite[Conjecture 1.11]{MWZ} for the rank 1 spherical variety $\O_{2k-1} \backslash \O_{2k}$ can be viewed as a special case of the results in this section.  The techniques of this section (and more generally, this paper) will also similarly resolve \cite[Conjecture 1.11]{MWZ} for the rank 1 spherical variety $\O_{2k} \backslash \O_{2k+1}$, via the classical theta correspondence between the metaplectic groups and odd orthogonal groups. The other (exceptional) rank 1 spherical varieties are expected to be resolved via exceptional theta correspondences; there are, of course, more technical details that need to be worked out concerning the exceptional theta correspondences.  The recent preprint \cite{LW} treats the case of the spherical variety ${\rm Spin_9} \backslash F_4$.
    
\vskip 15pt

\end{document}